\documentclass[12pt]{amsart}

\usepackage{cite}
\usepackage{geometry}
\usepackage{lmodern, microtype}
\usepackage{amsthm, amsmath, amssymb, amsfonts}
\usepackage{mathtools}
\mathtoolsset{showonlyrefs=true}

\usepackage{mathrsfs} 
\allowdisplaybreaks[3] 
\usepackage{hyperref}

\hypersetup{
    colorlinks=true,
    linkcolor=blue,
    filecolor=magenta,      
    urlcolor=cyan,
    citecolor=red,
}

\newtheorem{theorem}{Theorem}[section]
\newtheorem{proposition}[theorem]{Proposition}
\newtheorem{lemma}[theorem]{Lemma}
\newtheorem{corollary}[theorem]{Corollary}
\theoremstyle{definition}

\theoremstyle{remark}
\newtheorem{remark}[theorem]{Remark}

\numberwithin{equation}{section}

\newcommand{\doilink}[1]{\href{https://doi.org/#1}{\nolinkurl{#1}}}

\newcommand{\F}{\mathbb F}
\newcommand{\A}{\mathcal A}

\newcommand{\Sq}{\operatorname{Sq}}
\newcommand{\im}{\operatorname{im}}
\newcommand{\Span}{\operatorname{span}}
\newcommand{\rank}{\operatorname{rank}}

\newcommand{\End}{\operatorname{End}}
\newcommand{\Ann}{\operatorname{Ann}}

\def\DD{D\kern-.7em\raise0.4ex\hbox{\char '55}\kern.33em}

\title[Binary minor certificates for $MO(2)$]{Binary Minor Certificates\\ for the $\mathcal A(1)$-hit problem of $MO(2)$}

\author{Ph\'uc V\~o \DD\d{\u a}ng}
\address{Department of Mathematics, FPT University, An Phu Thinh New Urban Area, Quy Nhon, Vietnam}
\email{phucdv14@fe.edu.vn}
\thanks{ORCID: \url{https://orcid.org/0000-0002-6885-3996}}

\subjclass[2020]{Primary 55S10; Secondary 55R40, 55N22, 15A03}
\keywords{Steenrod algebra, hit problem, Thom space, symmetric polynomial, binary minor, module splitting}

\begin{document}

\begin{abstract}
Let $M=\widetilde H^*(MO(2);\F_2)$ and let $\A(1)$ be the subalgebra of the mod $2$ Steenrod algebra generated by $\Sq^1$ and $\Sq^2$. In this work, we determine the quotient $\F_2\otimes_{\A(1)}M$ in every internal degree $h\geq0$, where internal degree $h$ corresponds to cohomological degree $h+2$. To overcome the limitations of finite experimental extrapolation, our proof is strictly certifying: in each degree, we exhibit a determinant-one minor of the binary hit matrix and a complementary family of linear dual functionals annihilating every hit column, mathematically guaranteeing the exact rank for all degrees. This rigorously establishes the monomial generating families and module decomposition proposed in Shih's 2012 report. Furthermore, we derive a closed binomial formula for every Steenrod square from the length-two symmetric-polynomial model, correct a boundary omission in the $\A(0)$ calculation, compute the exact Hilbert series of the $\A(1)$-cohit, and prove an intrinsic direct-sum decomposition of $M$ into two stable $\A(1)$-submodules.
\end{abstract}

\maketitle

\section{Introduction}

The hit problem asks for a minimal set of generators of a graded module over the mod $2$ Steenrod algebra $\A$. For the polynomial algebra $P(s)=\F_2[x_1,\ldots,x_s]$, with each variable in degree one, the problem was initiated by Peterson and developed through work of Wood, Kameko, Singer, and many subsequent authors \cite{Peterson1989,Wood1989,Kameko1998,Singer1991,Sum2015,WalkerWood1}. Its finite-dimensional degree-$d$ form is a rank problem: one writes the positive Steenrod operations from lower degrees as the columns of a binary matrix, and the quotient by the column space is the cohit space in degree $d$.

The symmetric version replaces $P(s)$ by the invariant algebra $S(s):=P(s)^{\mathfrak S_s}$. Janfada and Wood \cite{JanfadaWood2002} proved the symmetric analogue of the Peterson vanishing criterion and solved the two-variable case. They subsequently determined a minimal $\A$-generating set for $H^*(BO(3);\F_2)=S(3)$ \cite{JanfadaWood2003}. Janfada's subsequent papers developed the symmetric hit problem and related ordinary and symmetrized monomial criteria in three variables, including a special case of a symmetric-hit conjecture and instability refinements \cite{Janfada2008,Janfada2009,Janfada2011,Janfada2014}. Walker and Wood \cite{WalkerWood1,WalkerWood2} organized these results through the length decomposition of the algebra of symmetric functions, the up and down Kameko maps, and the dual symmetric Steenrod kernel. Singer \cite{Singer2008} applied the bigraded Steenrod algebra to the dual symmetric problem. Pengelley and Williams \cite{PengelleyWilliams2011,PengelleyWilliams2015} constructed a Kudo--Araki--May action on the dual symmetric algebras and subsequently proved a general sparseness theorem at all primes.

The Thom space $MO(2)$ \cite{Thom1954} provides a natural meeting point of the symmetric hit problem and cobordism. If $u$ is the Thom class of the universal real two-plane bundle, then the Thom isomorphism and Thom identity give $\widetilde H^*(MO(2);\F_2)=u\F_2[w_1,w_2]$, with $|u|=2$, and $\Sq(u)=u(1+w_1+w_2)$; see \cite{Thom1952,MilnorStasheff1974}. As observed by Walker and Wood \cite[Section~25.6]{WalkerWood2}, the cohomology of $MO(n)$ is identified with the symmetric polynomials divisible by all variables, which in rank two is precisely the length-two symmetric summand. Consequently, the full $\A$-cohit of $MO(2)$ is determined by the two-variable symmetric theorem of Janfada and Wood \cite{JanfadaWood2002}; see also \cite[Theorem~25.2.1]{WalkerWood2}. However, the problem over finite subalgebras $\A(r)$ remains distinct, as the natural surjection from the $\A(r)$-cohit to the full $\A$-cohit need not be injective. Finite-subalgebra hit phenomena have also been studied in other modules. Ault \cite{Ault2014} obtained Bott-type periodicity and Hilbert-series information for the $\A(1)$-indecomposables of tensor powers of the augmentation ideal in $H^*(\mathbb{R}P^\infty;\F_2)$. Ault's formulas concern the tensor powers themselves; the module studied here is the length-two symmetric submodule.

In an unpublished 2012 report \cite{Shih2012}, Shih investigated the $\A(r)$-hit problem for $MO(2)$ for $0 \leq r \leq 3$. His Lemma~4.2 proposes the $\A(1)$ generating families, and his Theorem~4.3 discusses the corresponding module decomposition. While that report provides useful experimental formulas for the operations $\Sq^1$, $\Sq^2$, $\Sq^4$, and $\Sq^8$, its global conclusions rely on empirical extrapolation rather than formal proof. Specifically, two mathematical gaps remain unresolved. First, the proposed exact list for $\A(0)$ omits a boundary class in degrees congruent to $2$ modulo $4$. Second, and more critically, the all-degree conclusion for $\A(1)$ is deduced from finite low-degree column reductions. The observation that hit matrix entries depend periodically on exponent residues does not logically guarantee that pivot structures or matrix ranks stabilize as the homological dimensions approach infinity; the source ranges and endpoint rows must also be rigorously controlled. Furthermore, the arguments against splitting for higher subalgebras analyze only the overlaps of specific cyclic submodules generated by a chosen list, which does not exclude decompositions arising from alternative bases or nontrivial module idempotents.

The present article solves the $\A(1)$ problem completely and converts the proposed combinatorial pattern into a rigorous all-degree theorem. To overcome the limitations of the finite experimental extrapolation in \cite{Shih2012}, we construct an explicit degreewise primal--dual rank certificate for the $\A(1)$-hit matrices of the Thom module. The certificate consists of a determinant-one minor and a complementary family of linear forms in the annihilator of the hit space. Rather than tracing a full Gaussian elimination step-by-step, we establish exactness simultaneously from two sides: the minor bounds the hit matrix rank from below, while the linear forms bound the quotient dimension from below. The agreement of these two bounds mathematically guarantees the exact rank in every degree. The underlying rank criterion relies on elementary linear algebra; related minor bounds for polynomial hit matrices are considered in \cite{Phuc2026Minors}, while the dual Steenrod-kernel approach is developed in \cite[Chapter~9]{WalkerWood1} and \cite[Chapter~26]{WalkerWood2}. Furthermore, the structural module decomposition is verified directly via the stability of two intrinsic monomial spans, avoiding any reliance on prior generating-set assumptions.

The organization of this paper is as follows. Section~\ref{s2} formally identifies the Thom module with the length-two symmetric summand and records the known full-$\A$ consequence. Section~\ref{s3} derives the general binomial formula for the Steenrod action and formulates the binary certificate principle. Section~\ref{s4} solves the $\A(0)$ problem and isolates the missing boundary family. Section~\ref{s5} performs the degreewise $\A(1)$ upper-bound reductions. Section~\ref{s6} constructs the determinant-one minors and dual annihilators, thereby proving exactness. Section~\ref{s7} derives the Hilbert series and the direct-sum decomposition. Finally, Section~\ref{s8} outlines the methodological scope of the certification method and formulates the remaining extension questions for higher subalgebras.

\section{The Thom module as the length-two symmetric summand}\label{s2}

This section passes from the Thom description of $MO(2)$ to the symmetric-polynomial model that underlies the subsequent matrix calculation. The central point is not merely a vector-space identification: multiplication by $w_2$ intertwines every Steenrod square. The classical Thom isomorphism and Thom identity are due to Thom \cite{Thom1952}; a standard modern treatment of the splitting principle and Stiefel--Whitney classes is given in \cite{MilnorStasheff1974}, and the symmetric length decomposition is developed in \cite[Chapters~25--26]{WalkerWood2}.

Let
\[
S=H^*(BO(2);\F_2)=\F_2[w_1,w_2],
\qquad |w_1|=1,
\qquad |w_2|=2.
\]
This is the standard presentation of the mod $2$ cohomology of $BO(2)$ \cite{MilnorStasheff1974}. Let $u\in \widetilde H^2(MO(2);\F_2)$ be the Thom class. The Thom identity \cite{Thom1952,MilnorStasheff1974} is
\begin{equation}\label{eq:thom-total}
\Sq(u)=u(1+w_1+w_2).
\end{equation}
Thus $\Sq^1(u)=uw_1$, $\Sq^2(u)=uw_2$, and $\Sq^k(u)=0$ for $k>2$.

The following rank-two identification is also implicit in the symmetric Thom-space discussion of Walker and Wood \cite[Section~25.6]{WalkerWood2}.

\begin{proposition}\label{prop:thom-length-two}
Let $S=\F_2[w_1,w_2]$, and let $u$ be the Thom class of the universal real two-plane bundle. Then the map
\[
\Phi:\widetilde H^*(MO(2);\F_2)\longrightarrow w_2S,
\qquad
\Phi(uf)=w_2f,
\]
is an isomorphism of graded left $\A$-modules.  Under the splitting-principle realization $S=\F_2[x,y]^{\mathfrak S_2}$, the ideal $w_2S$ is the length-two symmetric summand spanned by the monomial symmetric functions $m(a,b)$ with $a,b\geq1$.
\end{proposition}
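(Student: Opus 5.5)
The plan is to check three things: $\Phi$ is a degree-preserving linear bijection, it commutes with every $\Sq^k$, and its image is the length-two summand.

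\emph{Bijectivity and degrees.} By the Thom isomorphism, $\widetilde H^*(MO(2);\F_2)$ is a free $S$-module of rank one on $u$. Every class is therefore uniquely $uf$ with $f\in S$. Since $S$ is a polynomial ring, hence an integral domain, multiplication by $w_2$ is injective. So $\Phi$ is a linear bijection onto $w_2S$, and it preserves degree because $|u|=|w_2|=2$.

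\emph{Compatibility with the Steenrod action.} I would use the total square $\Sq=\sum_k \Sq^k$, which is multiplicative by the Cartan formula. In $w_2S\subset S$, the Wu formula in rank two gives $\Sq(w_2)=w_2(1+w_1+w_2)$. Here $\Sq^1 w_2=w_1w_2$ and $\Sq^2 w_2=w_2^2$, as follows from $x y\mapsto xy(1+x)(1+y)$ under the splitting principle. On the Thom side, the module structure over $H^*(BO(2))$ satisfies the Cartan formula, so $\Sq(uf)=\Sq(u)\Sq(f)=u(1+w_1+w_2)\Sq(f)$ by \eqref{eq:thom-total}. Correspondingly,
\[
\Sq(w_2f)=w_2(1+w_1+w_2)\Sq(f)=\Phi\bigl(\Sq(uf)\bigr).
\]
Comparing homogeneous components gives $\Phi\Sq^k=\Sq^k\Phi$ for all $k$, so $\Phi$ is $\A$-linear. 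It is also important that $w_2S$ is an $\A$-submodule of $S$, which the same computation shows.

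\emph{The length-two description.} Write $w_1=x+y$ and $w_2=xy$. Then $S=\F_2[x,y]^{\mathfrak S_2}$ has basis the monomial symmetric functions $m(a,b)$ with $a\ge b\ge0$. For the inclusion $w_2S\subseteq \Span\{m(a,b):a,b\ge1\}$, note that every element of $w_2S$ is symmetric and divisible by $xy$. A symmetric polynomial $\sum c_{a,b}m(a,b)$ is divisible by $xy$ exactly when no monomial $x^a$ or $y^a$ with a zero exponent occurs, that is, when $c_{a,0}=0$ for all $a$. For the reverse inclusion, $m(a,b)=xy\,m(a-1,b-1)$ whenever $a,b\ge1$. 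Here $m(a-1,b-1)\in S$, so $m(a,b)\in w_2S$. Hence $w_2S$ is spanned by the $m(a,b)$ with $a,b\ge1$, and the $m(a,0)$ span a complement.

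\emph{Main obstacle.} The only delicate step is justifying the Cartan formula for the action of $\Sq$ on $u\cdot f$, which mixes the Thom class with base classes. I would invoke the standard fact that the Thom isomorphism is a map of $H^*(BO(2))$-modules and that the Steenrod squares satisfy the Cartan formula for the relative cup product $H^*(D,S)\otimes H^*(D)\to H^*(D,S)$ of the disc/sphere bundle pair. Everything else is formal.
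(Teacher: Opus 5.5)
Your proposal is correct and follows essentially the same route as the paper. The Thom isomorphism gives the degree-preserving bijection, the multiplicative total square together with \eqref{eq:thom-total} and $\Sq(w_2)=w_2(1+w_1+w_2)$ gives $\A$-linearity, and divisibility by $xy$ gives the length-two description. Your extra details fill in points the paper treats as immediate: injectivity of multiplication by $w_2$ because $S$ is a domain, the identity $m(a,b)=xy\,m(a-1,b-1)$, and the relative Cartan formula for $u\cdot f$.
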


\begin{proof}
The Thom isomorphism \cite{Thom1952,MilnorStasheff1974} gives a degree-preserving vector-space isomorphism $uS\cong S$ after a shift by two.  Multiplication by $w_2$ restores the cohomological degree, so $\Phi$ is a degree-preserving vector-space isomorphism from $uS$ onto the principal ideal $w_2S$.

It remains to verify compatibility with the Steenrod action. By the splitting principle \cite{MilnorStasheff1974}, write
\[
w_1=x+y,
\qquad
w_2=xy,
\]
where $|x|=|y|=1$.  Since $\Sq(x)=x+x^2=x(1+x)$ and similarly $\Sq(y)=y(1+y)$, the Cartan formula gives
\[
\Sq(w_2)=xy(1+x+y+xy)=w_2(1+w_1+w_2).
\]
For every $f\in S$, equations \eqref{eq:thom-total} and the Cartan formula now yield
\[\Phi\bigl(\Sq(uf)\bigr) =\Phi\bigl(u(1+w_1+w_2)\Sq(f)\bigr) =\Sq(w_2f)=\Sq\bigl(\Phi(uf)\bigr).
\]
Equality of total squares implies equality of every homogeneous component, so $\Phi$ is $\A$-linear.

Finally, a symmetric polynomial in $x$ and $y$ is divisible by $xy=w_2$ if and only if each of its monomials involves both variables.  The monomial symmetric functions with this property are exactly
\[
m(a,b)=
\begin{cases}
 x^ay^b+x^by^a,&a>b\geq1,\\
 x^ay^a,&a=b\geq1.
\end{cases}
\]
They form the standard basis of the length-two summand \cite[Section~26.1]{WalkerWood2}. Hence $w_2S$ is that summand.
\end{proof}

\begin{corollary}\label{cor:full-A}
Let $D\geq2$ be a cohomological degree.  Then
\[
\dim_{\F_2}\bigl(\F_2\otimes_{\A}\widetilde H^*(MO(2);\F_2)\bigr)^D
=
\begin{cases}
1,&D=(2^a-1)+(2^b-1)\text{ for some }a,b\geq1,\\
0,&\text{otherwise}.
\end{cases}
\]
When the dimension is one, the class is represented under $\Phi$ by the corresponding length-two symmetrized spike.
\end{corollary}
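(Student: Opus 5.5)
The plan is to transport the problem through the $\A$-isomorphism $\Phi$ of Proposition~\ref{prop:thom-length-two} and then read off the answer from the two-variable symmetric hit theorem of Janfada and Wood \cite{JanfadaWood2002} (equivalently \cite[Theorem~25.2.1]{WalkerWood2}). Since $\Phi$ is an isomorphism of graded left $\A$-modules, it induces an isomorphism of cohits
\[
\F_2\otimes_{\A}\widetilde H^*(MO(2);\F_2)\;\cong\;\F_2\otimes_{\A}w_2S
\]
that preserves cohomological degree. It therefore suffices to compute the cohit of the length-two summand $w_2S\subset S(2)=\F_2[x,y]^{\mathfrak S_2}$ degree by degree.

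The key step is to check that the cohit of $w_2S$ is exactly the length-two part of the cohit of $S(2)$. Write $S(2)=\F_2\oplus L_1\oplus w_2S$, where $L_1$ is spanned by the power sums $m(a)=x^a+y^a$, $a\geq1$. Then $w_2S$ is an $\A$-submodule, since it is an ideal closed under $\Sq$ by the Cartan formula and $\Sq(w_2)=w_2(1+w_1+w_2)$. Moreover, $\F_2\oplus L_1$ is also closed under $\Sq$, because $\Sq^k(x^a+y^a)=\binom ak(x^{a+k}+y^{a+k})$. Hence the length decomposition is a splitting of $\A$-modules, and the cohit of $S(2)$ is the direct sum of the cohits of the summands.

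Next we invoke the Janfada--Wood result. A minimal generating set of $S(2)$ consists of symmetrized spikes $m(2^a-1,2^b-1)$, together with the length-one and length-zero classes. In the length-two summand this gives exactly one generator in degree $D$ when $D=(2^a-1)+(2^b-1)$ with $a,b\geq1$, and none otherwise. We also need uniqueness of the class in each degree. The degree $D$ determines the unordered pair $\{a,b\}$, because a binary number of the form $2^a+2^b-2$ has a unique such representation. So the dimension is at most $1$.

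Finally, we confirm that $m(2^a-1,2^b-1)$ is not hit in $w_2S$. This holds because it is not hit in $S(2)$, and the splitting shows that hitness within the summand $w_2S$ agrees with hitness in $S(2)$. This non-hitness can also be seen directly: under the inclusion $S(2)\subset P(2)$, the spike $x^{2^a-1}y^{2^b-1}$ is unhit in $P(2)$. A hit expression in $S(2)$ would therefore give a hit expression for the symmetrized spike in $P(2)$, and projecting to the spike monomial yields a contradiction. This transfer between the symmetric and polynomial settings is the main step to verify carefully. The cleanest route is to rely on the cited theorem rather than reprove it, and to check only that the cited statement is phrased for length-two classes with $a,b\geq1$. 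This matches $D\geq2$, and the corollary follows.
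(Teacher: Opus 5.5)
Your proposal is correct and follows essentially the same route as the paper: you transport along $\Phi$, use the $\A$-module length splitting $S(2)=\F_2\oplus L_1\oplus w_2S$ (which you verify directly, where the paper cites Walker--Wood), and extract the length-two part of the Janfada--Wood cohit, with uniqueness of $\{a,b\}$ coming from the binary expansion of $D+2$; you sort the generators by length where the paper sorts by the parity of $D$, which amounts to the same thing. Your optional direct non-hit check needs the slightly stronger standard fact that a spike monomial never occurs as a term of $\Sq^k(f)$ for $k>0$, not just that the spike is unhit, but your main argument does not depend on that check.
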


\begin{proof}
Walker and Wood decompose the symmetric algebra as a direct sum of $\A$-submodules according to the number of variables occurring in a monomial symmetric function \cite[Section~26.1]{WalkerWood2}. Proposition~\ref{prop:thom-length-two} identifies the Thom module with the length-two summand. Janfada and Wood prove that the two-variable symmetric cohit is one-dimensional precisely in degrees admitting a spike partition of length at most two, and that it is generated by the associated symmetrized spike \cite{JanfadaWood2002}; see also \cite[Theorem~25.2.1]{WalkerWood2}. For odd $D$, every spike partition of length at most two has length one, because every positive spike exponent is odd. Its representative lies in the length-one summand, so the length-two cohit is zero. For even $D\geq2$, a spike partition cannot have length one, and therefore every nonzero symmetric cohit class lies in the length-two summand. The relevant partitions are precisely
\[
(2^a-1,2^b-1),
\qquad a,b\geq1.
\]
The unordered pair of positive exponents is unique whenever it exists: the identity $D+2=2^a+2^b$ determines two distinct powers by binary expansion when $a\ne b$, and determines $a=b$ when $D+2$ is a single power of two. Thus the retained class is one-dimensional, and all remaining degrees have zero length-two cohit.
\end{proof}

\begin{remark}\label{rem:full-not-open}
Corollary~\ref{cor:full-A} is a consequence of the established symmetric hit theorem \cite{JanfadaWood2002,WalkerWood2}. Its role here is to locate the finite-subalgebra question correctly.  The quotient by $\A(1)^+$ is much larger than the quotient by $\A^+$, and its exact all-degree structure does not follow from the full-$\A$ theorem.
\end{remark}

\section{Steenrod formulas and binary certificates}\label{s3}

The preceding identification makes it possible to derive every Steenrod square from two total-square identities. This section obtains the resulting binomial formula, specializes it to $\Sq^1$ and $\Sq^2$, and records the rank-certificate principle used later. Equivalent formulas for the action on $u w_1^n w_2^m$ appear in Shih's report \cite[Equations~(3.1)--(3.2)]{Shih2012}, while the identities for elementary symmetric functions are instances of the Wu formula \cite[Section~25.5]{WalkerWood2}. The coefficient formula below is derived directly and uniformly in all degrees.

For $n,m\geq0$, set
\[
v_{n,m}=u w_1^n w_2^m.
\]
We use the internal grading
\[
|v_{n,m}|_{\mathrm{int}}=n+2m.
\]
Thus $v_{n,m}$ has cohomological degree $n+2m+2$.  For $h\geq0$, define
\[
M_h=\Span_{\F_2}\{v_{n,m}:n+2m=h\}.
\]
The ordered basis of $M_h$ will be
\begin{equation}\label{eq:ehm}
e_m^{(h)}=v_{h-2m,m},
\qquad
0\leq m\leq N_h:=\left\lfloor\frac h2\right\rfloor.
\end{equation}

All binomial coefficients below are reduced modulo $2$. For a nonnegative upper entry $a$, we set $\binom ab=0$ when $b<0$ or $b>a$. Integer coefficients multiplying vectors are interpreted through $\mathbb Z\to\F_2$.

\begin{theorem}\label{thm:general-square}
Let $n,m,k$ be nonnegative integers. Then
\begin{equation}\label{eq:general-square}
\Sq^k(v_{n,m})
=
\sum_{t=0}^{\min\{m+1,\lfloor k/2\rfloor\}}
\binom{m+1}{t}
\binom{m+n+1-t}{k-2t}
 v_{n+k-2t,m+t}.
\end{equation}
\end{theorem}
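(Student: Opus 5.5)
The plan is to compute the total square $\Sq(v_{n,m})$ with the Cartan formula and then read off the homogeneous component of degree $n+2m+k$. From \eqref{eq:thom-total} and the identity $\Sq(w_2)=w_2(1+w_1+w_2)$ established in the proof of Proposition~\ref{prop:thom-length-two}, together with $\Sq(w_1)=w_1+w_1^2=w_1(1+w_1)$ (which follows from $w_1=x+y$ and $\Sq(x)=x+x^2$), multiplicativity of the total square gives
\[
\Sq(v_{n,m})=u\,w_1^n w_2^m\,(1+w_1)^n(1+w_1+w_2)^{m+1}.
\]
Thus the whole problem reduces to expanding $(1+w_1)^n(1+w_1+w_2)^{m+1}$ in the monomials $w_1^jw_2^t$ over $\F_2$.

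Next, write $(1+w_1+w_2)^{m+1}=\bigl((1+w_1)+w_2\bigr)^{m+1}$ and apply the binomial theorem:
\[
(1+w_1)^n(1+w_1+w_2)^{m+1}=\sum_{t=0}^{m+1}\binom{m+1}{t}w_2^t(1+w_1)^{n+m+1-t}.
\]
Expanding $(1+w_1)^{n+m+1-t}=\sum_j\binom{n+m+1-t}{j}w_1^j$, the term $w_1^jw_2^t$ contributes to $u w_1^{n+j}w_2^{m+t}=v_{n+j,m+t}$, whose extra degree is $j+2t$. Selecting the degree-$k$ component forces $j=k-2t$, which yields exactly the coefficient $\binom{m+1}{t}\binom{m+n+1-t}{k-2t}$ in \eqref{eq:general-square}.

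Finally, I will check the summation range. The requirement $j\ge0$ gives $t\le\lfloor k/2\rfloor$, and $t\le m+1$ comes from the first binomial, so the upper limit is $\min\{m+1,\lfloor k/2\rfloor\}$. Terms with $k-2t>m+n+1-t$ vanish by the stated convention, and since $t\le m+1$ the upper entry $m+n+1-t$ is nonnegative, so the convention applies. The only delicate point is the use of the Cartan formula on the product $u\cdot w_1^n w_2^m$, where $u$ is a cohomology class of the Thom space rather than an element of $S$. Here I will use the module Cartan formula for $\widetilde H^*(MO(2))$ as an $H^*(BO(2))$-module, which was already used in the proof of Proposition~\ref{prop:thom-length-two}. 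Alternatively, one can transport everything through $\Phi$ and compute $\Sq(w_2^{m+1}w_1^n)$ directly in $S$. Beyond that, the argument is bookkeeping.
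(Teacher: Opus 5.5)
Your proposal is correct and is essentially the paper's proof: the paper transports to $S$ via $\Phi$ and expands $\Sq(w_1^nw_2^{m+1})=w_1^nw_2^{m+1}(1+w_1)^n(1+w_1+w_2)^{m+1}$ by the binomial theorem applied to $(1+w_1)+w_2$, then extracts the degree-$k$ component via $s=k-2t$, exactly as you do. Your direct use of $\Sq(u)=u(1+w_1+w_2)$ simply supplies the extra factor that the additional $\Sq(w_2)$ supplies in the paper's version, and the summation range and binomial convention are handled the same way.
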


\begin{proof}
By Proposition~\ref{prop:thom-length-two}, it suffices to calculate the total square of
\[
\Phi(v_{n,m})=w_1^n w_2^{m+1}.
\]
The splitting-principle calculation in the proof of Proposition~\ref{prop:thom-length-two} gives the rank-two Wu identities \cite[Section~25.5]{WalkerWood2}:
\[
\Sq(w_1)=w_1(1+w_1),
\qquad
\Sq(w_2)=w_2(1+w_1+w_2).
\]
Therefore the Cartan formula gives
\begin{align*}
\Sq\bigl(w_1^n w_2^{m+1}\bigr)
&=\Sq(w_1)^n\Sq(w_2)^{m+1}\\
&=w_1^n w_2^{m+1}(1+w_1)^n(1+w_1+w_2)^{m+1}.
\end{align*}
Write $q=m+1$.  The ordinary binomial theorem, applied first to $(1+w_1)+w_2$, gives
\begin{align*}
(1+w_1+w_2)^q
&=\sum_{t=0}^{q}\binom qt w_2^t(1+w_1)^{q-t}.
\end{align*}
Multiplying by $(1+w_1)^n$ and expanding once more gives
\begin{align*}
(1+w_1)^n(1+w_1+w_2)^q
&=\sum_{t=0}^{q}\binom qt w_2^t(1+w_1)^{n+q-t}\\
&=\sum_{t=0}^{q}\sum_{s=0}^{n+q-t}
\binom qt\binom{n+q-t}{s}w_1^s w_2^t.
\end{align*}
Consequently,
\begin{equation}\label{eq:total-expanded}
\Sq\bigl(w_1^n w_2^q\bigr)
=
\sum_{t=0}^{q}\sum_{s=0}^{n+q-t}
\binom qt\binom{n+q-t}{s}
 w_1^{n+s}w_2^{q+t}.
\end{equation}
The term indexed by $(s,t)$ raises the cohomological degree by
\[
(n+s)+2(q+t)-(n+2q)=s+2t.
\]
Hence it contributes to $\Sq^k$ exactly when $s=k-2t$. The conditions $0\leq t\leq q$ and $s\geq0$ give $0\leq t\leq\min\{q,\lfloor k/2\rfloor\}$. The remaining condition $s\leq n+q-t$ is enforced by the binomial-coefficient convention. In particular, every upper entry in the resulting formula is nonnegative. Substituting $q=m+1$ and $s=k-2t$ into \eqref{eq:total-expanded} yields
\[
\Sq^k\bigl(w_1^n w_2^{m+1}\bigr)
=
\sum_{t=0}^{\min\{m+1,\lfloor k/2\rfloor\}}
\binom{m+1}{t}
\binom{m+n+1-t}{k-2t}
 w_1^{n+k-2t}w_2^{m+1+t}.
\]
Applying $\Phi^{-1}$ gives \eqref{eq:general-square}. Every output has internal degree
\[
(n+k-2t)+2(m+t)=n+2m+k.
\]
If a coefficient is nonzero, then $k-2t\leq n+m+1-t$ and $t\leq m+1$, whence $k\leq n+2m+2$. Thus the formula also gives zero above the cohomological instability bound.
\end{proof}

\begin{corollary}\label{cor:sq1-sq2}
Let $n,m\geq0$. Then
\begin{align}
\Sq^1(v_{n,m})
&=(n+m+1)v_{n+1,m},\label{eq:sq1-nm}\\
\Sq^2(v_{n,m})
&=\binom{n+m+1}{2}v_{n+2,m}+(m+1)v_{n,m+1}.\label{eq:sq2-nm}
\end{align}
For $h\geq1$ and $0\leq m\leq\lfloor(h-1)/2\rfloor$,
\begin{equation}\label{eq:sq1-target}
\Sq^1\bigl(e_m^{(h-1)}\bigr)=(h-m)e_m^{(h)}.
\end{equation}
For $h\geq2$ and $0\leq m\leq\lfloor(h-2)/2\rfloor$,
\begin{equation}\label{eq:sq2-target}
\Sq^2\bigl(e_m^{(h-2)}\bigr)
=\binom{h-m-1}{2}e_m^{(h)}+(m+1)e_{m+1}^{(h)}.
\end{equation}
\end{corollary}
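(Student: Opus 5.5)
The plan is to specialize Theorem~\ref{thm:general-square} to $k=1$ and $k=2$, and then re-index using the basis \eqref{eq:ehm}.

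For $k=1$ we have $\lfloor k/2\rfloor=0$, so only the term $t=0$ survives. Its coefficient is $\binom{m+1}{0}\binom{m+n+1}{1}=n+m+1$, and the output vector is $v_{n+1,m}$. This gives \eqref{eq:sq1-nm}.

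For $k=2$ the summation range is $0\le t\le\min\{m+1,1\}=1$, because $m+1\ge1$.
\begin{itemize}
\item The term $t=0$ contributes $\binom{m+n+1}{2}v_{n+2,m}$.
\item The term $t=1$ contributes $\binom{m+1}{1}\binom{m+n}{0}v_{n,m+1}$. Since $m+n\ge0$, the factor $\binom{m+n}{0}$ equals $1$, leaving $(m+1)v_{n,m+1}$.
\end{itemize}
Adding the two terms gives \eqref{eq:sq2-nm}.

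For the target forms we substitute the basis vectors of \eqref{eq:ehm} into these identities.
\begin{itemize}
\item For \eqref{eq:sq1-target}, write $e_m^{(h-1)}=v_{h-1-2m,m}$ with $n=h-1-2m$. The hypothesis $m\le\lfloor(h-1)/2\rfloor$ guarantees $n\ge0$, so \eqref{eq:sq1-nm} applies. It gives coefficient $n+m+1=h-m$ on $v_{h-2m,m}=e_m^{(h)}$. Here $m\le N_h$, so $e_m^{(h)}$ is a legitimate basis vector.
\item For \eqref{eq:sq2-target}, set $n=h-2-2m\ge0$. Then $n+m+1=h-m-1$ and $v_{n+2,m}=e_m^{(h)}$.
\end{itemize}

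The only point needing care is checking that $v_{n,m+1}=v_{h-2-2m,m+1}$ equals $e_{m+1}^{(h)}=v_{h-2(m+1),m+1}$. The exponents agree. The index is in range because $m\le\lfloor(h-2)/2\rfloor$ gives $m+1\le\lfloor h/2\rfloor=N_h$, so $e_{m+1}^{(h)}$ is a genuine basis vector of $M_h$.

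There is no real obstacle: the whole argument is this bookkeeping of index ranges, together with the mod $2$ reduction of the integer coefficients as stipulated before Theorem~\ref{thm:general-square}.
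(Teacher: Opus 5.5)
Your proposal is correct and matches the paper's proof: it specializes Theorem~\ref{thm:general-square} at $k=1$ (only $t=0$) and $k=2$ (terms $t=0,1$), then substitutes $n=h-1-2m$ and $n=h-2-2m$ respectively. Your explicit checks that $n\geq0$ and $m+1\leq N_h$ are a small addition the paper leaves implicit.
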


\begin{proof}
For $k=1$, only $t=0$ occurs in Theorem~\ref{thm:general-square}, and the coefficient is
\[
\binom{m+1}{0}\binom{m+n+1}{1}=n+m+1.
\]
For $k=2$, the terms $t=0$ and $t=1$ occur.  Their coefficients are
\[
\binom{m+1}{0}\binom{m+n+1}{2}
\quad\text{and}\quad
\binom{m+1}{1}\binom{m+n}{0}=m+1,
\]
which proves \eqref{eq:sq1-nm} and \eqref{eq:sq2-nm}.

For \eqref{eq:sq1-target}, substitute $n=h-1-2m$ into \eqref{eq:sq1-nm}.  Then
\[
n+m+1=(h-1-2m)+m+1=h-m,
\]
and the output is $v_{h-2m,m}=e_m^{(h)}$.  For \eqref{eq:sq2-target}, substitute $n=h-2-2m$ into \eqref{eq:sq2-nm}.  The first output is $e_m^{(h)}$, the second is $e_{m+1}^{(h)}$, and
\[
n+m+1=(h-2-2m)+m+1=h-m-1.
\]
This proves the target formulas.
\end{proof}

\begin{remark}\label{rem:Shih-action}
Equations~(3.1)--(3.2) of \cite{Shih2012} agree with the total-square derivation above. The expanded $\Sq^2$ display in Section~4.2 of that report interchanges the coefficients of $v_{n+2,m}$ and $v_{n,m+1}$. At $n=m=0$ it would give $v_{2,0}$, whereas the Thom identity gives $\Sq^2(u)=v_{0,1}$. The expanded $\Sq^4$ display in its Section~4.3 also differs from \eqref{eq:general-square}: at $(n,m)=(2,0)$ it gives zero, while \eqref{eq:general-square} gives $\Sq^4(v_{2,0})=v_{4,1}$. These observations concern those expanded displays, not the basic Cartan identities or the proposed $\A(1)$ generating families.
\end{remark}

For $r\geq0$, let
\[
\A(r)=\langle\Sq^1,\Sq^2,\Sq^4,\ldots,\Sq^{2^r}\rangle
\]
denote the standard finite sub-Hopf algebra of the Steenrod algebra \cite[Section~12.4]{WalkerWood1}, and let $\A(r)^+$ be its augmentation ideal. We adopt the convention $M_d=0$ for $d<0$, and define
\[
H_h^{(r)}:=(\A(r)^+M)_h,
\qquad
Q_h^{(r)}M:=M_h/H_h^{(r)}.
\]

\begin{lemma}\label{lem:generator-images}
Let $h,r\geq0$. Then
\[
H_h^{(r)}
=
\sum_{j=0}^{r}
\im\bigl(\Sq^{2^j}:M_{h-2^j}\longrightarrow M_h\bigr).
\]
\end{lemma}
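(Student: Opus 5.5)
We prove the two inclusions separately. The inclusion ``$\supseteq$'' is immediate. Each $\Sq^{2^j}$ with $0\le j\le r$ lies in $\A(r)^+$, because it has positive degree and is one of the defining generators. Hence $\Sq^{2^j}(M_{h-2^j})\subseteq(\A(r)^+M)_h$. When $h<2^j$ the summand is zero by the convention $M_d=0$ for $d<0$.

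For ``$\subseteq$'' we use that $\A(r)$ is, by definition, the subalgebra generated by $\Sq^1,\Sq^2,\ldots,\Sq^{2^r}$. As a vector space it is therefore spanned by $1$ together with the words $\Sq^{2^{j_1}}\Sq^{2^{j_2}}\cdots\Sq^{2^{j_\ell}}$ with $\ell\ge1$ and each $j_i\le r$. Since $\A(r)^+$ is the positive-degree part, it is spanned by the words with $\ell\geq1$.

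The key step is to factor off the leftmost letter. A word of length $\ell\ge1$ and degree $d>0$ can be written as $\Sq^{2^{j_1}}\theta$, where $\theta$ is a word of length $\ell-1$ (possibly $\theta=1$) of degree $d-2^{j_1}$. For a homogeneous $x\in M_{h-d}$, the element $\theta x$ lies in $M_{h-2^{j_1}}$. Consequently
\[
\Sq^{2^{j_1}}\theta x\in\im\bigl(\Sq^{2^{j_1}}:M_{h-2^{j_1}}\to M_h\bigr).
\]
A general element of $(\A(r)^+M)_h$ is a finite sum $\sum a_ix_i$ with $a_i\in\A(r)^+$ and $x_i\in M$. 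Projecting to degree $h$ and expanding each homogeneous component of $a_i$ in the spanning words, every term lands in one of the displayed images. This proves $\subseteq$ by linearity.

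The only point needing care is bookkeeping. First, $\A(r)^+M$ is graded because $\A(r)^+$ is a graded ideal and $M$ is a graded module, so the degree-$h$ part is spanned by homogeneous products $ax$ with $|a|+|x|=h$. Second, we need that the span-by-words description holds in the augmentation ideal and not just in $\A(r)$. Both are routine, so I expect no genuine obstacle. The factorization through the \emph{leftmost} generator, rather than the rightmost, is what makes the argument work, since it places the output directly in an image of some $\Sq^{2^j}$.
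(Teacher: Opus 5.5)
Your proof is correct and follows the paper's own argument. The easy inclusion holds because each $\Sq^{2^j}$ is a positive-degree element of $\A(r)$. The reverse inclusion comes from expanding positive-degree elements as sums of nonempty words and factoring off the leftmost generator, and your remarks on the grading of $\A(r)^+M$ just spell out the paper's ``take the homogeneous component of degree $h$'' step.
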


\begin{proof}
The right-hand side is contained in $H_h^{(r)}$ because each $\Sq^{2^j}$ has positive degree and belongs to $\A(r)$. Conversely, every positive-degree element of $\A(r)$ is a linear combination of words in the generators $\Sq^{2^j}$, and every nonempty word has a leftmost generator. If
\[
\theta=\Sq^{2^j}\theta'
\]
is such a word, then $\theta(z)=\Sq^{2^j}(\theta'(z))$ lies in the image of $\Sq^{2^j}$. Taking the homogeneous component of internal degree $h$ and summing over the words proves the reverse inclusion.
\end{proof}

The hit matrix in degree $h$ has rows indexed by \eqref{eq:ehm} and columns given by the generator images in Lemma~\ref{lem:generator-images}. If $U$ is a subspace of a finite-dimensional vector space $T$, write
\[
\Ann(U)=\{\lambda\in T^*: \lambda(u)=0\text{ for every }u\in U\}.
\]
The determinant of a $0\times0$ matrix is taken to be $1$. The following linear-algebra criterion includes this case.

\begin{proposition}\label{prop:certificate-principle}
Let $N,k,q$ be nonnegative integers with $k\leq N$, let $T$ be an $N$-dimensional vector space over $\F_2$ with a fixed basis, and let $C$ be an $N\times q$ matrix whose columns span $U\subseteq T$. Let $g_1,\ldots,g_k\in T$. Suppose that $C$ has an $(N-k)\times(N-k)$ minor of determinant $1$, and that $\lambda_1,\ldots,\lambda_k\in\Ann(U)$ satisfy
\[
\lambda_i(g_j)=\delta_{ij}\qquad(1\leq i,j\leq k).
\]
Then
\[
\rank C=\dim U=N-k,\qquad \dim(T/U)=k.
\]
The classes of $g_1,\ldots,g_k$ form a basis of $T/U$, and $\lambda_1,\ldots,\lambda_k$ form a basis of $\Ann(U)$.
\end{proposition}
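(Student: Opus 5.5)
The argument bounds $\dim U$ from both sides: the minor gives $\dim U\geq N-k$, and the functionals give $\dim U\leq N-k$. The basis claims then follow from a short dimension count.

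\emph{Lower bound.} A square submatrix of $C$ with determinant $1$ over $\F_2$ is invertible. Hence its $N-k$ columns are linearly independent, and so are the corresponding full columns of $C$: a dependency among the full columns would restrict to a dependency among the selected rows. Therefore $\rank C=\dim U\geq N-k$. When $k=N$ the minor is the empty $0\times0$ one and the bound $\dim U\geq0$ is trivial.

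\emph{Upper bound.} The relations $\lambda_i(g_j)=\delta_{ij}$ show that $\lambda_1,\ldots,\lambda_k$ are linearly independent in $T^*$: if $\sum_i c_i\lambda_i=0$, evaluating at $g_j$ gives $c_j=0$. By the same evaluation, $g_1,\ldots,g_k$ are linearly independent. Since all $\lambda_i$ lie in $\Ann(U)$, we get $\dim\Ann(U)\geq k$. Recall the standard identification $\Ann(U)\cong(T/U)^*$, which gives
\[
\dim\Ann(U)=N-\dim U .
\]
Hence $\dim U\leq N-k$. Combining the two bounds yields $\rank C=\dim U=N-k$ and $\dim(T/U)=k$.

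\emph{Bases.} We now have $\dim\Ann(U)=k$, so the $k$ independent functionals $\lambda_i$ form a basis of $\Ann(U)$. For the quotient, each $\lambda_i$ vanishes on $U$ and so descends to a functional $\bar\lambda_i$ on $T/U$. Suppose $\sum_j c_j\bar g_j=0$ in $T/U$. Applying $\bar\lambda_i$ gives $c_i=0$, so the classes $\bar g_1,\ldots,\bar g_k$ are linearly independent. As $\dim(T/U)=k$, they form a basis.

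There is no real obstacle here. The only points needing care are that determinant $1$ over $\F_2$ means invertibility, so that the rank lower bound holds, and that the degenerate cases $k=0$ and $k=N$ are covered by the convention that the empty determinant equals $1$.
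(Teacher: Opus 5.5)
Your proof is correct and follows the paper's argument: the determinant-one minor gives $\dim U\geq N-k$, the dual functionals give the matching upper bound, and the basis claims then follow by counting dimensions. The only cosmetic difference is in the upper bound. The paper notes that $U\subseteq\ker L$ for the surjection $L=(\lambda_1,\ldots,\lambda_k)\colon T\to\F_2^k$, which also yields $U=\ker L$ and the isomorphism $T/U\cong\F_2^k$ sending $[g_j]$ to the $j$th basis vector; you instead use $\dim\Ann(U)=N-\dim U$ directly.
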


\begin{proof}
A linear relation among the selected columns restricts on the selected rows to a relation among the columns of an invertible square matrix. Its coefficients are therefore zero. Thus $\dim U\geq N-k$.

Define $L:T\to\F_2^k$ by
\[
L(x)=(\lambda_1(x),\ldots,\lambda_k(x)).
\]
The pairing hypothesis says that $L(g_j)$ is the $j$th standard basis vector, so $L$ is surjective. Rank--nullity gives $\dim\ker L=N-k$. Each $\lambda_i$ vanishes on $U$, and hence $U\subseteq\ker L$. Combining this inclusion with the preceding lower bound gives $U=\ker L$ and $\dim U=N-k$. Consequently $L$ induces an isomorphism $T/U\to\F_2^k$ taking $[g_j]$ to the $j$th standard basis vector.

The forms $\lambda_i$ are linearly independent: evaluating a relation $\sum_i c_i\lambda_i=0$ at $g_j$ gives $c_j=0$. Every form vanishing on $U$ factors uniquely through $T/U$, so $\Ann(U)$ is naturally isomorphic to $(T/U)^*$ and has dimension $k$. The independent forms $\lambda_i$ therefore form a basis. When $k=0$ the target $\F_2^0$ and both asserted bases are empty; the same dimension argument applies. When $N-k=0$, the first lower bound is $\dim U\geq0$, as required.
\end{proof}

\section{The \texorpdfstring{$\mathcal A(0)$}{A(0)} cohit and its boundary term}\label{s4}

Before treating $\A(1)$, this section resolves the one-generator case exactly. Shih's proposed $\A(0)$ list contains the two interior residue families \cite[Lemma~4.1]{Shih2012}. The calculation below proves those families and shows that the endpoint behavior contributes an additional boundary class.

Since $\A(0)$ is generated by $\Sq^1$, Lemma~\ref{lem:generator-images} gives
\[
H_h^{(0)}=\im\bigl(\Sq^1:M_{h-1}\longrightarrow M_h\bigr),
\qquad
Q_h^{(0)}M=M_h/H_h^{(0)}.
\]

\begin{theorem}\label{thm:A0}
Let $h\geq0$. If $h$ is odd, a basis of $Q_h^{(0)}M$ is represented by
\begin{equation}\label{eq:A0-odd}
\bigl\{v_{h-4a-2,\,2a+1}:a\geq0,\ h-4a-2\geq0\bigr\}.
\end{equation}
If $h$ is even, a basis is represented by
\begin{equation}\label{eq:A0-even-main}
\bigl\{v_{h-4a,\,2a}:a\geq0,\ h-4a\geq0\bigr\},
\end{equation}
together with the additional class $v_{0,h/2}$ when $h\equiv2\pmod4$.
\end{theorem}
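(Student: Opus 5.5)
The plan is to compute $H_h^{(0)}$ directly from the target formula \eqref{eq:sq1-target}, since $\Sq^1$ is \emph{diagonal} with respect to the bases \eqref{eq:ehm}. By Lemma~\ref{lem:generator-images} with $r=0$, $H_h^{(0)}$ is the span of the vectors $\Sq^1(e_m^{(h-1)})$ for $0\leq m\leq N_{h-1}$. By \eqref{eq:sq1-target}, each such vector equals $(h-m)e_m^{(h)}$. Hence $H_h^{(0)}$ is the coordinate subspace spanned by the basis vectors
\[
\bigl\{e_m^{(h)}:0\leq m\leq N_{h-1},\ h-m\ \text{odd}\bigr\}.
\]
For $h=0$ the source is $M_{-1}=0$ by convention. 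Consequently the classes of the complementary basis vectors form a basis of $Q_h^{(0)}M$:
\[
\bigl\{e_m^{(h)}: 0\leq m\leq N_h,\ \text{and either } h-m \text{ is even or } m>N_{h-1}\bigr\}.
\]
No certificate machinery is needed here. Alternatively, one can phrase this through Proposition~\ref{prop:certificate-principle}, using the identity minor on the hit rows and the coordinate functionals on the remaining rows.

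The remaining work is bookkeeping in two cases.

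\textbf{Odd $h$.} Here $N_{h-1}=N_h=(h-1)/2$, so no boundary index appears. The surviving indices are those with $h-m$ even, that is, $m$ odd. Writing $m=2a+1$, the surviving vectors are $e_{2a+1}^{(h)}=v_{h-4a-2,\,2a+1}$. The condition $m\leq N_h$ is equivalent to $h-4a-2\geq0$, which gives \eqref{eq:A0-odd}.

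\textbf{Even $h$.} Here $N_h=h/2$ but $N_{h-1}=h/2-1$. The interior survivors are the even indices $m=2a$, giving $e_{2a}^{(h)}=v_{h-4a,\,2a}$ with $h-4a\geq0$, which is \eqref{eq:A0-even-main}. The top index $m=h/2$ lies outside the source range of $\Sq^1$, so $e_{h/2}^{(h)}=v_{0,h/2}$ always survives. It is already in the list \eqref{eq:A0-even-main} exactly when $h/2$ is even, that is, when $h\equiv0\pmod4$. It is therefore a genuinely additional class precisely when $h\equiv2\pmod4$.

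The only delicate point, and the one I would check most carefully, is this endpoint: confirming that $e_{h/2}^{(h)}$ is not in the image merely because it has no source. Its coefficient $h-m=h/2$ would be odd when $h\equiv2\pmod4$, so a naive residue pattern would wrongly predict it is hit. This is exactly the boundary omission in Shih's list. I would also verify that the edge cases $h=0$ and $h=2$ agree with the formulas.
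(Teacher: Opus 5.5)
Your proposal is correct and follows essentially the same route as the paper. Both arguments observe that each $\Sq^1$ column is the single basis vector $e_m^{(h)}$ scaled by $h-m$, so the hit space is a coordinate subspace, then split into odd and even $h$ and treat the sourceless endpoint $m=h/2$ separately, deducing independence of the survivors directly from this diagonal structure.
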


\begin{proof}
If $h=0$, then $M_{-1}=0$ and $M_0=\Span_{\F_2}\{v_{0,0}\}$, which gives the stated basis. Assume $h\geq1$. For each existing source index, \eqref{eq:sq1-target} states that
\[
\Sq^1\bigl(e_m^{(h-1)}\bigr)=(h-m)e_m^{(h)}.
\]
No column contains more than one target basis vector.  Hence a target vector $e_m^{(h)}$ is hit if and only if the source $e_m^{(h-1)}$ exists and the scalar $h-m$ is odd.

Assume first that $h=2H+1$ is odd.  Both $M_h$ and $M_{h-1}$ have basis indices $0\leq m\leq H$.  Since
\[
h-m\equiv1-m\pmod2,
\]
the coefficient is one exactly when $m$ is even.  Therefore the non-hit indices are the odd integers
\[
m=2a+1,
\qquad
2a+1\leq H.
\]
For such an index,
\[
e_{2a+1}^{(h)}
=v_{h-2(2a+1),\,2a+1}
=v_{h-4a-2,\,2a+1},
\]
which gives \eqref{eq:A0-odd}.

Now assume that $h=2H$ is even.  The target indices are $0\leq m\leq H$, while the source indices are $0\leq m\leq H-1$.  For every source index,
\[
h-m\equiv m\pmod2,
\]
so the hit indices are precisely the odd integers not exceeding $H-1$.  Every even index is therefore non-hit.  Writing $m=2a$ gives the family \eqref{eq:A0-even-main}.

There remains the target endpoint $m=H$, which has no source with the same index.  If $H$ is even, then this endpoint already belongs to the family $m=2a$.  If $H$ is odd, then it is not in that family and must be added separately.  The condition that $H$ be odd is equivalent to $h=2H\equiv2\pmod4$, and the endpoint vector is
\[
e_H^{(h)}=v_{h-2H,H}=v_{0,h/2}.
\]
Since every $\Sq^1$ column is a scalar multiple of a single basis vector, the listed non-hit vectors are automatically linearly independent modulo the image.  They therefore form the asserted bases.
\end{proof}

\begin{corollary}\label{cor:A0-dim}
Let $h=4q+r$, where $q\geq0$ and $0\leq r<4$. Then
\[
\dim Q_h^{(0)}M=
\begin{cases}
q+1,&r=0,\\
q,&r=1,\\
q+2,&r=2,\\
q+1,&r=3.
\end{cases}
\]
\end{corollary}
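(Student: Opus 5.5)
The plan is to read the dimension directly off the bases in Theorem~\ref{thm:A0} by counting the admissible indices $a$ in each family. By that theorem the listed vectors are linearly independent modulo $H_h^{(0)}$ and span $Q_h^{(0)}M$, so $\dim Q_h^{(0)}M$ is the number of listed vectors. Write $h=4q+r$ with $0\leq r<4$, and treat the four residues in turn.

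\emph{Even case, $r\in\{0,2\}$.} The family \eqref{eq:A0-even-main} is indexed by the integers $a\geq0$ with $4a\leq h=4q+r$. Since $r<4$, this condition is equivalent to $a\leq q$. The family therefore contributes $q+1$ vectors. When $r=0$ we have $h\equiv0\pmod4$, so there is no boundary class and the dimension is $q+1$. When $r=2$ the additional class $v_{0,h/2}$ is present. It is distinct from the main family, because $h/2=2q+1$ is odd whereas the main family has even second index $2a$. The dimension is then $q+2$.

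\emph{Odd case, $r\in\{1,3\}$.} The family \eqref{eq:A0-odd} requires $4a+2\leq h=4q+r$.
\begin{itemize}
\item[] If $r=1$, the condition reads $4a\leq 4q-1$, i.e.\ $a\leq q-1$, giving $q$ vectors. This count is $0$ when $q=0$, which is consistent with $h=1$.
\item[] If $r=3$, the condition reads $4a\leq 4q+1$, i.e.\ $a\leq q$, giving $q+1$ vectors.
\end{itemize}

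The only step needing care is the floor computation in each residue class, together with checking that the boundary class for $r=2$ is not double-counted. The parity of the second index settles the latter, so there is no real obstacle.
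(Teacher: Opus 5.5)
Your proposal is correct and follows the paper's own argument: the paper also obtains the dimension by counting the admissible indices in the bases of Theorem~\ref{thm:A0}, one residue class at a time. The paper counts the second index $m$ rather than your parameter $a$, which is only a relabeling, and it leaves implicit (in the phrase ``the odd endpoint contributes one more'') the check you make explicitly, namely that the boundary class $v_{0,h/2}$ for $r=2$ is not double-counted because its second index is odd.
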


\begin{proof}
The assertion follows by counting the indices in Theorem~\ref{thm:A0}.  For $h=4q$, the even indices from $0$ to $2q$ contribute $q+1$ classes.  For $h=4q+2$, the even indices from $0$ to $2q$ contribute $q+1$ classes and the odd endpoint contributes one more.  For $h=4q+1$, the odd indices from $1$ to $2q-1$ contribute $q$ classes.  For $h=4q+3$, the odd indices from $1$ to $2q+1$ contribute $q+1$ classes.
\end{proof}

\begin{remark}\label{rem:A0-boundary}
The first omitted case occurs at internal degree $h=2$.  The endpoint is $v_{0,1}=uw_2$.  The space $M_1$ is spanned by $v_{1,0}=uw_1$, and
\[
\Sq^1(uw_1)=\Sq^1(u)w_1+u\Sq^1(w_1)
=uw_1^2+uw_1^2=0.
\]
Thus $uw_2$ is not hit over $\A(0)$. The endpoint family in Theorem~\ref{thm:A0} is absent from the even-degree list in \cite[Lemma~4.1]{Shih2012}.
\end{remark}

\section{Degreewise reduction for \texorpdfstring{$\mathcal A(1)$}{A(1)}}\label{s5}

The $\A(1)$ problem introduces $\Sq^2$, whose columns contain at most two target basis vectors.  This section reduces every degree to an explicit candidate set.  Exact independence is postponed only until Section~\ref{s6}, where it is certified by dual annihilators and complementary minors.

By Lemma~\ref{lem:generator-images},
\begin{equation}\label{eq:A1-hit-space}
H_h^{(1)}
=
\im\bigl(\Sq^1:M_{h-1}\to M_h\bigr)
+
\im\bigl(\Sq^2:M_{h-2}\to M_h\bigr).
\end{equation}

\begin{proposition}\label{prop:A1-odd}
Let $h\geq1$ be odd. Then $Q_h^{(1)}M=0$.
\end{proposition}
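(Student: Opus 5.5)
The plan is to show directly that every basis vector $e_m^{(h)}$ of $M_h$, $0\leq m\leq H$ with $h=2H+1$, lies in the hit space \eqref{eq:A1-hit-space}. This forces $H_h^{(1)}=M_h$, and hence $Q_h^{(1)}M=0$. No certificate is needed here, since the claim is a vanishing statement and only requires exhibiting preimages.

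\emph{Even indices.} These are already handled by the $\A(0)$ analysis. By \eqref{eq:sq1-target}, for $0\leq m\leq\lfloor(h-1)/2\rfloor=H$ we have $\Sq^1(e_m^{(h-1)})=(h-m)e_m^{(h)}$. Since $h$ is odd, $h-m\equiv 1-m\pmod 2$, so the coefficient equals $1$ exactly when $m$ is even. Thus every $e_m^{(h)}$ with $m$ even is hit by $\Sq^1$. This is just the complement of the list \eqref{eq:A0-odd} in Theorem~\ref{thm:A0}.

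\emph{Odd indices.} These are handled with $\Sq^2$. Fix an odd $m$ with $1\leq m\leq H$. The source $e_{m-1}^{(h-2)}$ exists, because $m-1\leq H-1=\lfloor(h-2)/2\rfloor$. By \eqref{eq:sq2-target},
\[
\Sq^2\bigl(e_{m-1}^{(h-2)}\bigr)=\binom{h-m}{2}e_{m-1}^{(h)}+m\,e_m^{(h)} .
\]
The coefficient $m$ is odd, so it equals $1$. The companion vector $e_{m-1}^{(h)}$ has even index, so by the previous step it is already in $\im\Sq^1$. Subtracting gives $e_m^{(h)}\in H_h^{(1)}$ regardless of the parity of $\binom{h-m}{2}$.

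Together the two cases cover all indices $0\leq m\leq H$, so $H_h^{(1)}=M_h$. The only step requiring care is the endpoint bookkeeping: one must confirm that the $\Sq^1$ source range reaches $m=H$ (it does, since $\lfloor(h-1)/2\rfloor=H$ for odd $h$), and that the $\Sq^2$ source index $m-1$ stays within range. I expect no genuine obstacle; the argument is a two-step triangular elimination that holds uniformly in all odd degrees.
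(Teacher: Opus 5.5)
Your proposal is correct and follows the paper's proof essentially step for step: even-indexed targets are hit by $\Sq^1$ because $h-m$ is odd, and each odd-indexed $e_m^{(h)}$ is obtained from $\Sq^2(e_{m-1}^{(h-2)})$ after cancelling its even-indexed companion, which is already in the $\Sq^1$ image. Your source-range checks ($m\leq H$ for $\Sq^1$ and $m-1\leq H-1$ for $\Sq^2$) match the paper's.
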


\begin{proof}
Write $h=2H+1$.  The target basis is $e_m^{(h)}$ for $0\leq m\leq H$.  Formula \eqref{eq:sq1-target} gives
\[
\Sq^1\bigl(e_m^{(h-1)}\bigr)=(h-m)e_m^{(h)}.
\]
When $m$ is even, $h-m$ is odd, so every even-indexed target vector is hit by $\Sq^1$.

Let $m=2a+1$ be odd.  Since $2a+1\leq H$, the index $2a$ satisfies $2a\leq H-1$, so $e_{2a}^{(h-2)}$ exists.  Formula \eqref{eq:sq2-target} gives
\begin{align*}
\Sq^2\bigl(e_{2a}^{(h-2)}\bigr)
&=\binom{h-2a-1}{2}e_{2a}^{(h)}+(2a+1)e_{2a+1}^{(h)}\\
&=\binom{h-2a-1}{2}e_{2a}^{(h)}+e_{2a+1}^{(h)}.
\end{align*}
The first term on the right is already hit by $\Sq^1$.  Therefore $e_{2a+1}^{(h)}$ is hit modulo the $\Sq^1$ image, and hence it belongs to the sum \eqref{eq:A1-hit-space}.  Every target basis vector is hit, so the quotient is zero.
\end{proof}

For even $h$, define a set $G_h$ of target basis vectors as follows.  When $h\equiv0\pmod4$, let
\begin{equation}\label{eq:Gh-zero}
G_h=
\bigl\{e_{2+4a}^{(h)}:a\geq0,\ 2+4a\leq h/2\bigr\}
\cup
\begin{cases}
\{e_{h/2}^{(h)}\},&h\equiv0\pmod8,\\
\emptyset,&h\equiv4\pmod8.
\end{cases}
\end{equation}
When $h\equiv2\pmod4$, let
\begin{equation}\label{eq:Gh-two}
G_h=
\bigl\{e_{4a}^{(h)}:a\geq0,\ 4a\leq h/2\bigr\}
\cup
\begin{cases}
\{e_{h/2}^{(h)}\},&h\equiv6\pmod8,\\
\emptyset,&h\equiv2\pmod8.
\end{cases}
\end{equation}

\begin{proposition}\label{prop:A1-even-spanning}
Let $h\geq0$ be even. Then the quotient $Q_h^{(1)}M$ is spanned by the classes represented by $G_h$.
\end{proposition}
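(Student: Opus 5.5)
The plan is to reduce every target basis vector of $M_h$ modulo the hit space \eqref{eq:A1-hit-space}, working column by column with the explicit formulas \eqref{eq:sq1-target} and \eqref{eq:sq2-target}. Write $h=2H$. Then the target basis is $e_m^{(h)}$ for $0\le m\le H$, and both source spaces $M_{h-1}$ and $M_{h-2}$ have indices $0\le m\le H-1$. The case $h=0$ is trivial, since $G_0=\{e_0^{(0)}\}$ and $M_0$ is one-dimensional. For the rest, I show that each $e_m^{(h)}\notin G_h$ is congruent modulo $H_h^{(1)}$ to zero or to an element of $G_h$.

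\emph{Step 1: the $\Sq^1$ columns.} Since $h-m\equiv m\pmod 2$, formula \eqref{eq:sq1-target} shows exactly as in Theorem~\ref{thm:A0} that every odd index $m\le H-1$ is hit.

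\emph{Step 2: the $\Sq^2$ columns for even interior $m$.} For even $m\le H-2$, the column \eqref{eq:sq2-target} is
\[
\binom{h-m-1}{2}e_m^{(h)}+e_{m+1}^{(h)}.
\]
Its second term has odd index $m+1\le H-1$, so it is already hit by Step~1. Hence $e_m^{(h)}$ is hit whenever $\binom{h-m-1}{2}$ is odd. Here $h-m-1$ is odd, and $\binom{n}{2}$ is odd exactly when $n\equiv 2,3\pmod 4$, so this occurs iff $h-m\equiv0\pmod4$, i.e.\ $m\equiv h\pmod 4$. The surviving interior even indices are therefore those with $m\equiv h+2\pmod4$. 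These are $m\equiv2\pmod4$ when $h\equiv0\pmod4$, and $m\equiv0\pmod4$ when $h\equiv2\pmod4$, which are precisely the residue families in \eqref{eq:Gh-zero} and \eqref{eq:Gh-two}. The $\Sq^2$ columns with odd source index contribute nothing new, because their coefficient $m+1$ is even and they are multiples of an already-hit vector.

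\emph{Step 3: the endpoint indices $m\in\{H-1,H\}$.} This case analysis on $H$ is the main obstacle, because the endpoint column from source index $H-1$ has no $\Sq^1$-hit partner.
\begin{itemize}
\item If $H$ is even, then $e_H^{(h)}$ receives no column at all, since the only $\Sq^2$ column reaching it comes from the odd index $H-1$ and has even coefficient $(H-1)+1=H$. So $e_H^{(h)}$ must appear in $G_h$. When $H\equiv h+2\pmod 4$ it already lies in the residue family. Otherwise $H\equiv0\pmod4$, i.e.\ $h\equiv0\pmod8$, and $e_{h/2}^{(h)}$ is exactly the extra element in \eqref{eq:Gh-zero}.
\item If $H$ is odd, then $H-1$ is even and the column from index $H-1$ is
\[
\binom{H}{2}e_{H-1}^{(h)}+e_H^{(h)},
\]
since $h-(H-1)-1=H$.
\begin{itemize}
\item When $H\equiv1\pmod4$, so $h\equiv2\pmod8$, the binomial is even and $e_H^{(h)}$ is hit outright. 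Meanwhile $H-1\equiv0\pmod4$ lies in the family.
\item When $H\equiv3\pmod4$, so $h\equiv6\pmod8$, the binomial is odd, giving $e_{H-1}^{(h)}\equiv e_H^{(h)}$. Here $H-1\equiv2\pmod4$ is not in the family, and $e_{H-1}^{(h)}$ is reduced to $e_{h/2}^{(h)}$, the extra element of \eqref{eq:Gh-two}.
\end{itemize}
\end{itemize}

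Collecting Steps 1--3, every target basis vector lies in $H_h^{(1)}+\Span G_h$, which proves the spanning claim. Independence is not needed here; it is deferred to the certificate of Section~\ref{s6}, as announced.
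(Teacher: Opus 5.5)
Your proposal is correct and follows essentially the same route as the paper: $\Sq^1$ clears the odd rows below the endpoint, and the interior $\Sq^2$ columns clear the even rows with $m\equiv h\pmod 4$ via the parity of $\binom{h-m-1}{2}$. The endpoint is then handled by the same case split on $H \bmod 4$. The only cosmetic difference is that you use the elementary criterion that $\binom{n}{2}$ is odd iff $n\equiv 2,3\pmod 4$, where the paper appeals to Lucas's theorem.
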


\begin{proof}
For $h=0$, the source spaces in \eqref{eq:A1-hit-space} are zero and $G_0=\{e_0^{(0)}\}$, so the assertion holds. Suppose that $h=2H>0$. Formula \eqref{eq:sq1-target} becomes
\[
\Sq^1\bigl(e_m^{(h-1)}\bigr)=(2H-m)e_m^{(h)}=m e_m^{(h)}.
\]
The source index satisfies $0\leq m\leq H-1$.  It follows that every odd-indexed target vector $e_m^{(h)}$ with $m\leq H-1$ is hit by $\Sq^1$.

Now consider an even index $m\leq H-1$.  Formula \eqref{eq:sq2-target} gives
\begin{equation}\label{eq:even-sq2-relation}
\Sq^2\bigl(e_m^{(h-2)}\bigr)
=a_m e_m^{(h)}+e_{m+1}^{(h)},
\qquad
a_m=\binom{h-m-1}{2}.
\end{equation}
The second term is odd-indexed.  If $m+1\leq H-1$, it is already hit by $\Sq^1$.  The parity of $a_m$ is determined by the binary form of Lucas's theorem \cite{Fine1947}.  Since $h$ and $m$ are even, the integer $h-m-1$ is odd.  The lower entry $2$ has binary expansion $10$, so
\[
\binom{h-m-1}{2}=1
\]
if and only if the $2$-bit of $h-m-1$ is one.  For an odd integer this is equivalent to
\[
h-m-1\equiv3\pmod4.
\]
Adding one gives
\[
h-m\equiv0\pmod4,
\]
and therefore
\begin{equation}\label{eq:am-residue}
a_m=1
\quad\Longleftrightarrow\quad
m\equiv h\pmod4.
\end{equation}
For every even $m\leq H-2$, equation \eqref{eq:even-sq2-relation} therefore kills $e_m^{(h)}$ modulo the $\Sq^1$ image when $m\equiv h\pmod4$.  The even indices that remain are exactly those satisfying
\[
m\equiv h+2\pmod4.
\]
If $h\equiv0\pmod4$, these are the indices $m=2+4a$.  If $h\equiv2\pmod4$, these are the indices $m=4a$.  This accounts for the principal families in \eqref{eq:Gh-zero} and \eqref{eq:Gh-two}.

It remains to analyze the endpoint $m=H$.  There are two cases, according to the parity of $H$.

Suppose first that $H$ is even.  Then $e_H^{(h)}$ has even index.  It has no $\Sq^1$ source because the source range ends at $H-1$.  It can occur as the second term of a $\Sq^2$ column only from source index $H-1$, but that coefficient is
\[
(H-1)+1=H=0\quad\text{in }\F_2.
\]
It cannot occur as the first term of a $\Sq^2$ column because source index $H$ does not exist.  Thus $e_H^{(h)}$ survives.  If $H\equiv2\pmod4$, it is already in the principal family $m=2+4a$; this is the case $h\equiv4\pmod8$.  If $H\equiv0\pmod4$, it is not in that family and must be added separately; this is the case $h\equiv0\pmod8$.

Suppose next that $H$ is odd.  Then the endpoint is odd-indexed and is not hit by $\Sq^1$ because it has no source.  The source index $H-1$ is even, and \eqref{eq:sq2-target} gives
\begin{equation}\label{eq:boundary-sq2}
\Sq^2\bigl(e_{H-1}^{(h-2)}\bigr)
=\binom H2 e_{H-1}^{(h)}+e_H^{(h)}.
\end{equation}
The same binary congruence \cite{Fine1947} gives
\[
\binom H2=
\begin{cases}
0,&H\equiv1\pmod4,\\
1,&H\equiv3\pmod4.
\end{cases}
\]
If $H\equiv1\pmod4$, equivalently $h\equiv2\pmod8$, equation \eqref{eq:boundary-sq2} hits $e_H^{(h)}$ directly, while $e_{H-1}^{(h)}$ is one of the principal survivors because $H-1\equiv0\pmod4$.

If $H\equiv3\pmod4$, equivalently $h\equiv6\pmod8$, equation \eqref{eq:boundary-sq2} is the relation
\[
e_{H-1}^{(h)}+e_H^{(h)}=0
\quad\text{in }Q_h^{(1)}M.
\]
Here $H-1\equiv2\pmod4$, so $e_{H-1}^{(h)}$ is not in the principal survivor family.  The relation leaves at most one class from the pair, and we choose $e_H^{(h)}$ as its representative.  This is the additional term in \eqref{eq:Gh-two}.  All target basis vectors not represented in $G_h$ have now been reduced to the hit space or to a linear combination of the listed representatives.  Hence $G_h$ spans the quotient.
\end{proof}

\begin{corollary}\label{cor:A1-upper}
Let $h\geq0$ be even. Then
\[
\dim Q_h^{(1)}M
\leq
\left\lfloor\frac{h+2}{8}\right\rfloor+1.
\]
\end{corollary}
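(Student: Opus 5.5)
The bound follows from Proposition~\ref{prop:A1-even-spanning}: since the classes of $G_h$ span $Q_h^{(1)}M$, we have $\dim Q_h^{(1)}M\leq|G_h|$. It therefore suffices to count $G_h$ exactly from \eqref{eq:Gh-zero} and \eqref{eq:Gh-two} and compare with $\lfloor(h+2)/8\rfloor+1$. I would split into the four residues of $h$ modulo $8$. In each case I count the principal family by bounding its parameter $a$, and I check that the added endpoint $e_{h/2}^{(h)}$, when present, is not already in the principal family. For that check only the residue of $h/2$ modulo $4$ is needed.

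The counts are as follows.
\begin{itemize}
\item For $h=8k$, the principal indices are $m=2+4a\leq 4k$, so $0\leq a\leq k-1$, giving $k$ elements. The endpoint index $4k$ is $\equiv0\pmod4$, so it is new, and $|G_h|=k+1$. The case $h=0$ (with $k=0$) fits: the principal family is empty and $G_0=\{e_0^{(0)}\}$.
\item For $h=8k+4$, the indices $2+4a\leq 4k+2$ give $a=0,\ldots,k$, so $|G_h|=k+1$.
\item For $h=8k+2$, the indices $4a\leq 4k+1$ give $a=0,\ldots,k$, so $|G_h|=k+1$.
\item For $h=8k+6$, the indices $4a\leq 4k+3$ give $k+1$ elements. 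The endpoint index $4k+3$ is odd, hence new, and $|G_h|=k+2$.
\end{itemize}

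On the other side, $\lfloor(h+2)/8\rfloor+1$ equals $k+1$ for $h=8k$ and $h=8k+2$, since $\lfloor(8k+2)/8\rfloor=\lfloor(8k+4)/8\rfloor=k$. It also equals $k+1$ for $h=8k+4$, since $\lfloor(8k+6)/8\rfloor=k$. For $h=8k+6$ it equals $k+2$. Thus $|G_h|=\lfloor(h+2)/8\rfloor+1$ in every case, and the stated inequality follows.

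There is no real obstacle. The only care needed is at the boundary: checking the largest admissible $a$ against the inequality $4a\leq h/2$ (or $2+4a\leq h/2$), and confirming that the endpoint class is not double-counted. Both checks reduce to the residue computations above.
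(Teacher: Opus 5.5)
Your proposal is correct and follows the paper's proof. Both deduce the bound from the spanning statement of Proposition~\ref{prop:A1-even-spanning} and count $G_h$ in four cases; the paper indexes the cases by $H=h/2$ modulo $4$ and you index them by $h$ modulo $8$, which is the same split, and your tallies and floor evaluations (including your explicit check that the endpoint is not double-counted) agree with the paper's.
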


\begin{proof}
It suffices to count $G_h$.  Write $H=h/2$.

If $H=4q$, then the principal indices are $2,6,\ldots,4q-2$, giving $q$ elements, with the progression empty when $q=0$. The endpoint adds one, for a total of $q+1$.  If $H=4q+2$, the principal indices are $2,6,\ldots,4q+2$, giving $q+1$ elements.  If $H=4q+1$, the principal indices are $0,4,\ldots,4q$, giving $q+1$ elements.  If $H=4q+3$, the principal indices are $0,4,\ldots,4q$, giving $q+1$ elements, and the endpoint adds one, for a total of $q+2$.  In all four cases the result equals
\[
\left\lfloor\frac{H+1}{4}\right\rfloor+1
=
\left\lfloor\frac{h+2}{8}\right\rfloor+1.
\]
\end{proof}

\begin{remark}\label{rem:Shih-A1-list}
The set $G_h$ agrees with the families in \cite[Lemma~4.2]{Shih2012}, with nonnegative exponents understood. Proposition~\ref{prop:A1-even-spanning} proves spanning; Section~\ref{s6} proves independence.
\end{remark}

\section{Determinant-one minors and dual annihilators}\label{s6}

The preceding reductions identify a spanning set for the quotient. We now select a minor of the complementary size and construct its dual annihilators. The coefficient and endpoint calculations from Section~\ref{s5} determine the row and column selections, while Proposition~\ref{prop:certificate-principle} gives exactness.

Let $\mathsf{B}_h^{(1)}$ denote the $\A(1)$ hit matrix in degree $h$, with rows indexed by $e_m^{(h)}$ and columns indexed by all vectors
\[
\Sq^1(e_m^{(h-1)})
\quad\text{and}\quad
\Sq^2(e_m^{(h-2)})
\]
for which the source exists.

\begin{theorem}\label{thm:A1-exact}
Let $h\geq0$. If $h$ is odd, the matrix $\mathsf{B}_h^{(1)}$ has full row rank and $Q_h^{(1)}M=0$.  If $h$ is even, the classes represented by $G_h$ form a basis of $Q_h^{(1)}M$. In particular,
\[
\dim Q_h^{(1)}M=
\begin{cases}
0,&h\text{ odd},\\
\left\lfloor\dfrac{h+2}{8}\right\rfloor+1,&h\text{ even}.
\end{cases}
\]
The matrix $\mathsf{B}_h^{(1)}$ has a minor of size $\dim M_h-\dim Q_h^{(1)}M$ and determinant $1$.
\end{theorem}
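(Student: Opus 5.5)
We apply Proposition~\ref{prop:certificate-principle} in each degree, with $T=M_h$, $N=N_h+1$ and $C=\mathsf{B}_h^{(1)}$. Proposition~\ref{prop:A1-even-spanning} and Corollary~\ref{cor:A1-upper} already give an upper bound on the quotient. So it remains to produce two things: a determinant-one minor of size $N-|G_h|$, and $|G_h|$ linear forms in $\Ann(H_h^{(1)})$ that pair to the identity with $G_h$.

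\textbf{Odd $h=2H+1$.} Here $|G_h|=0$, so only a full-size minor is needed. We take all $N=H+1$ rows, together with the following columns:
\begin{itemize}
\item for even $m$, the column $\Sq^1(e_m^{(h-1)})=e_m^{(h)}$;
\item for odd $m=2a+1$, the column $\Sq^2(e_{2a}^{(h-2)})=\binom{h-2a-1}{2}e_{2a}^{(h)}+e_{2a+1}^{(h)}$.
\end{itemize}
Order rows and columns by $m$. Each column then has a $1$ on the diagonal and otherwise only one entry, immediately above the diagonal. The matrix is upper unitriangular, so its determinant is $1$.

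\textbf{Even $h=2H$.} Let $R$ be the set of row indices not in $G_h$. The minor pairs each $m\in R$ with one column, as follows.
\begin{itemize}
\item Odd $m\le H-1$ uses $\Sq^1(e_m^{(h-1)})=e_m^{(h)}$.
\item Even $m\le H-2$ with $m\equiv h\pmod 4$ uses $\Sq^2(e_m^{(h-2)})=e_m^{(h)}+e_{m+1}^{(h)}$, by \eqref{eq:am-residue}.
\item When $h\equiv2\pmod8$, the endpoint $m=H$ uses $\Sq^2(e_{H-1}^{(h-2)})=e_H^{(h)}$, since here $\binom H2=0$ and the column has a single entry.
\item When $h\equiv6\pmod8$, the index $m=H-1$ is not in $G_h$. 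It uses the column \eqref{eq:boundary-sq2}, restricted to row $H-1$, where its entry is $\binom H2=1$; the row $H$ is not selected.
\end{itemize}
Ordered by $m$, every selected column restricted to the rows $R$ has a $1$ on the diagonal and at most one further entry, at a larger index. The minor is therefore unitriangular with determinant $1$, which gives $\rank\ge N-|G_h|$. The main bookkeeping here is to check that $m\mapsto$ column is injective, and that each column uses a source that actually exists, i.e.\ $m\le H-1$ for the $\Sq^1$ and $\Sq^2$ sources. This is exactly the content of the endpoint case analysis in Proposition~\ref{prop:A1-even-spanning}.

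\textbf{Dual functionals (the main obstacle).} For each $g=e_j^{(h)}\in G_h$ we must build $\lambda_j\in\Ann(H_h^{(1)})$ with $\lambda_j(e_{j'}^{(h)})=\delta_{jj'}$ for $j'\in G_h$. The natural first attempt is the coordinate functional $e_j^*$, corrected as follows.
\begin{itemize}
\item For a principal index $j\equiv h+2\pmod4$ with $j\le H-2$, take $\lambda_j=e_j^*$. Every $\Sq^1$ column $m\,e_m^{(h)}$ is supported on an odd index, so it is killed. A $\Sq^2$ column $a_me_m^{(h)}+e_{m+1}^{(h)}$ (with $m$ even) meets $j$ only when $m=j$, and then $a_j=0$ by \eqref{eq:am-residue}.
\item For the endpoint $j=H$ in the cases $h\equiv0\pmod8$ and $h\equiv4\pmod8$, take $\lambda_H=e_H^*$. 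This row has no $\Sq^1$ source, and the only $\Sq^2$ column reaching it carries the coefficient $H\equiv0$.
\item When $h\equiv2\pmod8$, the index $j=H-1\equiv0\pmod4$ is the top principal survivor. We take $e_{H-1}^*$ and must check the $\Sq^2$ column from source $H-1$; its coefficient there is $\binom H2=0$, so the check passes.
\item When $h\equiv6\pmod8$, we use $\lambda=e_{H-1}^*+e_H^*$. It kills the column $e_{H-1}+e_H$ from \eqref{eq:boundary-sq2}. It also kills the column from source $H-2$, namely $a_{H-2}e_{H-2}+e_{H-1}$, because $H-2\not\equiv h\pmod 4$ forces $a_{H-2}=0$, leaving only $e_{H-1}$, which is itself a $\Sq^1$ image.
\end{itemize}
The last case does not close as stated: $\lambda(e_{H-1})=1\neq0$, so $e_{H-1}^*+e_H^*$ fails to annihilate that $\Sq^1$ column. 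Note also that $H-1$ is even, so $e_{H-1}$ is not a $\Sq^1$ image, and that $\Sq^1$ column is in fact $(H-1)e_{H-1}^{(h)}=0$. I expect the real work to lie in this careful endpoint verification. Concretely, I would list, for each $\lambda$, every column with nonzero support on its rows, and check each one using the parity formulas \eqref{eq:sq1-target}, \eqref{eq:sq2-target} and \eqref{eq:am-residue}.

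With $\lambda$'s as above, the pairing $\lambda_i(g_j)=\delta_{ij}$ is immediate, since the supports of the $\lambda$'s meet $G_h$ only at their own index. Proposition~\ref{prop:certificate-principle} then yields the basis statement, the dimension formula via Corollary~\ref{cor:A1-upper}, and the minor of the asserted size.
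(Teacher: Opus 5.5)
Your minor and your functionals are the same as the paper's. The minor pairs the odd rows with $\Sq^1$ columns, the even rows $m\equiv h\pmod 4$ with $\Sq^2$ columns, and uses the boundary column. The functionals are the coordinate forms $\varepsilon_j$ on $G_h$, replaced by $\varepsilon_{H-1}+\varepsilon_H$ when $h\equiv 6\pmod 8$. The odd case, the even-degree minor and your first three functional cases are correct. The gap is the last case, which you leave open and in fact assert fails; without it you have no lower bound on $\dim Q_h^{(1)}M$ in degrees $h\equiv 6\pmod 8$, so the basis statement is unproved there.

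The failure comes from a miscomputed column. When $h\equiv 6\pmod 8$, $H$ is odd, so the source index $H-2$ is odd. By \eqref{eq:sq2-target} the coefficient of $e_{H-1}^{(h)}$ in $\Sq^2(e_{H-2}^{(h-2)})$ is $(H-2)+1=H-1\equiv 0$. That column is therefore $a_{H-2}e_{H-2}^{(h)}$, supported on an odd row, and it does not contain $e_{H-1}^{(h)}$ at all. The shape $a_me_m^{(h)}+e_{m+1}^{(h)}$ you used holds only for even $m$. In the principal case this restriction was harmless, since odd-source $\Sq^2$ columns $a_me_m^{(h)}$ live on odd rows. Here it produced a spurious obstruction. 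Your appeal to \eqref{eq:am-residue} for $a_{H-2}$ is also unjustified, since that congruence was derived for even $m$. It is irrelevant anyway, because $\lambda$ does not see odd rows.

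To close the case, list every column meeting rows $H-1$ or $H$:
\begin{itemize}
\item The $\Sq^1$ column at row $H-1$ has coefficient $h-(H-1)=H+1$, which is even, so it is zero; you noticed this at the end.
\item Row $H$ has no $\Sq^1$ source. It cannot be a first $\Sq^2$ term, because $e_H^{(h-2)}$ does not exist.
\item The only $\Sq^2$ column containing $e_H^{(h)}$ is the boundary column $e_{H-1}^{(h)}+e_H^{(h)}$, on which $\lambda$ gives $1+1=0$.
\item The only other $\Sq^2$ column that could contain $e_{H-1}^{(h)}$ is the one from source $H-2$, which does not.
\end{itemize}
Hence $\varepsilon_{H-1}+\varepsilon_H$ annihilates the hit space. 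Since $H-1\equiv 2\pmod 4$ is not in $G_h$, it pairs to $1$ with $e_H^{(h)}$ and to $0$ with the other elements of $G_h$. With this check added, your argument is complete and coincides with the paper's proof.
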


\begin{proof}
Suppose first that $h=2H+1$. Let $E$ and $O$ be, respectively, the even and odd indices in $\{0,\ldots,H\}$. Select the $\Sq^1$ columns with source indices in $E$ and the $\Sq^2$ columns with source indices $m-1$ for $m\in O$. These are the columns used in Proposition~\ref{prop:A1-odd}; their source ranges were verified there. In the row order $E,O$ and the corresponding column order, their matrix is
\begin{equation}\label{eq:unit-block-minor}
\begin{pmatrix} I_{|E|}&A\\0&I_{|O|}\end{pmatrix}.
\end{equation}
The first columns are the even coordinate vectors, and each remaining column has coefficient $1$ at its associated odd row and no other odd component. Thus the determinant is $1$. There are $H+1$ selected columns, so this is a full-row-rank minor. The annihilator is zero in this case.

Now let $h=2H$ be even. For $h=0$, the hit matrix has one row and no columns, and $G_0=\{e_0^{(0)}\}$. The empty minor has determinant $1$, and the coordinate form $\varepsilon_0$ annihilates the zero hit space and takes value $1$ on $e_0^{(0)}$. Proposition~\ref{prop:certificate-principle} therefore applies. Assume $h>0$ for the remainder of the proof. We construct a minor using all target rows except those in $G_h$.  For every odd $m$ with $1\leq m\leq H-1$, select
\[
x_m=\Sq^1(e_m^{(h-1)})=e_m^{(h)}.
\]
For every even $m$ with $0\leq m\leq H-1$ and
\[
a_m=\binom{h-m-1}{2}=1,
\]
select
\[
y_m=\Sq^2(e_m^{(h-2)})=e_m^{(h)}+e_{m+1}^{(h)}.
\]
There is one additional selection when $H\equiv1\pmod4$.  In that case $a_{H-1}=0$, and we select
\[
y_{H-1}=\Sq^2(e_{H-1}^{(h-2)})=e_H^{(h)}.
\]
This extra column pivots the boundary row $e_H^{(h)}$, which is not in $G_h$ when $h\equiv2\pmod8$.

We verify that the selected pivot rows are exactly the complement of $G_h$.  The $x_m$ pivot all odd rows below the endpoint.  By \eqref{eq:am-residue}, the regular $y_m$ pivot precisely the even indices satisfying $m\equiv h\pmod4$.  The unselected even indices satisfy $m\equiv h+2\pmod4$ and are exactly the principal part of $G_h$.  If $H$ is even, the endpoint $e_H^{(h)}$ belongs to $G_h$, either as a principal entry or as the additional $h\equiv0\pmod8$ entry.  If $H\equiv1\pmod4$, the endpoint is in the complement and is pivoted by the special column $y_{H-1}=e_H^{(h)}$.  If $H\equiv3\pmod4$, the endpoint belongs to $G_h$, while the regular column
\[
y_{H-1}=e_{H-1}^{(h)}+e_H^{(h)}
\]
pivots $e_{H-1}^{(h)}$ after the row $e_H^{(h)}$ is omitted from the minor.

Order the retained rows by listing first the odd rows pivoted by the $x_m$, then the rows pivoted by the selected $y_m$.  Order the columns in the same two groups.  The restricted matrix again has the form
\[
\begin{pmatrix}
I&A\\
0&I
\end{pmatrix}.
\]
Every $x_m$ is a coordinate vector in the first row group. Each regular $y_m$ has its distinct even pivot $e_m^{(h)}$ and one odd component. If $m+1<H$, that odd component lies in the first row group. If $m+1=H$, the regular selection forces $H\equiv3\pmod4$, and the endpoint row is omitted. The special boundary column, when present, is exactly $e_H^{(h)}$ and has no other component. Consequently the lower-right block is an identity matrix and the lower-left block is zero.  Thus the determinant of the selected minor is one.  If $g_h=|G_h|$, this proves
\begin{equation}\label{eq:rank-lower}
\rank \mathsf{B}_h^{(1)}\geq \dim M_h-g_h.
\end{equation}

We next construct $g_h$ dual annihilators.  Let $\varepsilon_j\in M_h^*$ denote the coordinate functional
\[
\varepsilon_j(e_m^{(h)})=\delta_{jm}.
\]
For every element $e_j^{(h)}\in G_h$, except the endpoint in the congruence class $h\equiv6\pmod8$, take $\lambda_j=\varepsilon_j$.  We show that $\varepsilon_j$ annihilates every hit column.

First consider a $\Sq^1$ column.  Such a column is a scalar multiple of $e_m^{(h)}$, and it is nonzero only when $m$ is odd.  Every ordinary index $j$ in $G_h$ is even, so $\varepsilon_j$ vanishes on every $\Sq^1$ column.

Now consider a $\Sq^2$ column
\[
\binom{h-m-1}{2}e_m^{(h)}+(m+1)e_{m+1}^{(h)}.
\]
An ordinary candidate index $j$ can occur as the first term only when $m=j$.  For every non-boundary principal candidate, \eqref{eq:am-residue} gives
\[
\binom{h-j-1}{2}=0.
\]
An ordinary endpoint with $H$ even has no source index $m=H$, so it cannot occur as a first term.  If $j=0$, it cannot occur as a second term because the putative source index is negative. If $j>0$, it can occur as a second term only when $m=j-1$. Since $j$ is even, the coefficient is
\[
m+1=j=0\quad\text{in }\F_2.
\]
Therefore every ordinary $\varepsilon_j$ vanishes on every $\Sq^2$ column.

It remains to handle $h\equiv6\pmod8$.  Then $H=h/2\equiv3\pmod4$, and $G_h$ contains the endpoint $e_H^{(h)}$.  Define
\[
\lambda_H=\varepsilon_{H-1}+\varepsilon_H.
\]
The row $e_{H-1}^{(h)}$ is even-indexed, and
\[
\Sq^1(e_{H-1}^{(h-1)})=(h-H+1)e_{H-1}^{(h)}=(H+1)e_{H-1}^{(h)}=0
\]
because $H+1$ is even.  The endpoint has no $\Sq^1$ source.  Thus $\lambda_H$ annihilates all $\Sq^1$ columns.

Among the $\Sq^2$ columns, the only column containing $e_H^{(h)}$ is the boundary column with source index $H-1$.  Since $H\equiv3\pmod4$, equation \eqref{eq:boundary-sq2} gives
\[
\Sq^2(e_{H-1}^{(h-2)})=e_{H-1}^{(h)}+e_H^{(h)},
\]
and $\lambda_H$ evaluates on this column as $1+1=0$.  A column containing $e_{H-1}^{(h)}$ as a second term would have source index $H-2$, but its second coefficient would be $H-1$, which is even.  No other column contains either coordinate.  Hence $\lambda_H$ annihilates every $\Sq^2$ column.

The ordinary coordinate functionals pair as the identity with their corresponding elements of $G_h$.  In the exceptional boundary case, $e_{H-1}^{(h)}$ is not in $G_h$, so
\[
\lambda_H(e_H^{(h)})=1
\]
and $\lambda_H$ vanishes on every other candidate.  Thus the complete family of annihilators pairs as the identity with $G_h$.  Proposition~\ref{prop:certificate-principle}, together with the determinant-one minor, gives
\[
\rank \mathsf{B}_h^{(1)}=\dim M_h-g_h,
\qquad
\dim Q_h^{(1)}M=g_h,
\]
and the classes represented by $G_h$ form a basis.  Corollary~\ref{cor:A1-upper} evaluates $g_h$ as the stated dimension.
\end{proof}

\begin{corollary}\label{cor:A1-monomial-basis}
Let $h\geq0$. If $h\equiv0\pmod4$, a monomial basis of $Q_h^{(1)}M$ is represented by
\[
\bigl\{v_{h-4-8a,\,2+4a}:a\geq0,\ h-4-8a\geq0\bigr\},
\]
together with $v_{0,h/2}$ when $h\equiv0\pmod8$.  For $h\equiv2\pmod4$, a monomial basis is represented by
\[
\bigl\{v_{h-8a,\,4a}:a\geq0,\ h-8a\geq0\bigr\},
\]
together with $v_{0,h/2}$ when $h\equiv6\pmod8$.  For odd $h$, the basis is empty.
\end{corollary}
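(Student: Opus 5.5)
This corollary is a translation of Theorem~\ref{thm:A1-exact} into the monomial notation $v_{n,m}$. The plan is to take the basis $G_h$ from that theorem and rewrite each element using $e_m^{(h)}=v_{h-2m,m}$ from \eqref{eq:ehm}. For odd $h$ the theorem already gives $Q_h^{(1)}M=0$, so the basis is empty. For even $h$, the theorem says the classes of $G_h$ form a basis. It therefore suffices to check that the index sets in \eqref{eq:Gh-zero} and \eqref{eq:Gh-two} correspond bijectively to the listed monomials.

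For $h\equiv0\pmod4$, the principal indices are $m=2+4a$ with $2+4a\leq h/2$. Substituting gives
\[
e_{2+4a}^{(h)}=v_{h-4-8a,\,2+4a},
\]
and the constraint $2+4a\leq h/2$ is equivalent to $h-4-8a\geq0$. The endpoint is $e_{h/2}^{(h)}=v_{0,h/2}$, which is present exactly when $h\equiv0\pmod8$.

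For $h\equiv2\pmod4$, the principal indices are $m=4a$ with $4a\leq h/2$. These give $e_{4a}^{(h)}=v_{h-8a,4a}$, and the constraint becomes $h-8a\geq0$. The extra endpoint $v_{0,h/2}$ appears exactly when $h\equiv6\pmod8$.

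The only point needing care is to confirm that the extra endpoint is not already counted in the principal family, so that nothing is listed twice. For $h\equiv0\pmod8$ we have $h/2\equiv0\pmod4$, while the principal indices are $\equiv2\pmod4$. For $h\equiv6\pmod8$ we have $h/2\equiv3\pmod4$, while the principal indices are $\equiv0\pmod4$. Conversely, in the excluded residues ($h\equiv4$ or $2\pmod8$) the endpoint either already lies in the principal family or is hit, which is exactly as recorded in \eqref{eq:Gh-zero} and \eqref{eq:Gh-two}. I expect no real obstacle here; the argument is purely bookkeeping on top of Theorem~\ref{thm:A1-exact}.
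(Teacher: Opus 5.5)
Your proposal is correct and follows the paper's own argument: take the basis $G_h$ from Theorem~\ref{thm:A1-exact}, substitute $e_m^{(h)}=v_{h-2m,m}$ into \eqref{eq:Gh-zero} and \eqref{eq:Gh-two}, and use $Q_h^{(1)}M=0$ for odd $h$. Your additional checks are correct bookkeeping that the paper leaves implicit: the index bounds become nonnegativity of the first exponent, and the endpoint $v_{0,h/2}$ is never listed twice.
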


\begin{proof}
Substitute $e_m^{(h)}=v_{h-2m,m}$ into \eqref{eq:Gh-zero} and \eqref{eq:Gh-two}.  When $m=2+4a$, the first exponent is $h-4-8a$.  When $m=4a$, it is $h-8a$.  The endpoint $m=h/2$ gives first exponent zero.
\end{proof}

\section{Hilbert series and the \texorpdfstring{$\mathcal A(1)$}{A(1)} splitting}\label{s7}

The degreewise bases from Section~\ref{s6} determine homogeneous generators of the whole module. We first justify this passage by induction on internal degree and compute the Hilbert series. We then verify the stability of the two monomial spans underlying Shih's decomposition \cite[Theorem~4.3]{Shih2012}.

Write
\[
Q^{(1)}M=\bigoplus_{h\geq0}Q_h^{(1)}M,
\qquad
P_{Q^{(1)}M}(t)=\sum_{h\geq0}\dim_{\F_2}(Q_h^{(1)}M)t^h.
\]

\begin{proposition}\label{prop:global-generators}
Let $M=\widetilde H^*(MO(2);\F_2)$ with the $\A(1)$ action of Corollary~\ref{cor:sq1-sq2}. Then a minimal homogeneous generating set for $M$ is the disjoint union $\mathcal G_{2,3}\cup\mathcal G_{0,1}$, where
\begin{align*}
\mathcal G_{2,3}
&=
\{v_{4a,\,2+4b}:a,b\geq0\}
\cup
\{v_{0,\,3+4a}:a\geq0\},\\
\mathcal G_{0,1}
&=
\{v_{4a+2,\,4b}:a,b\geq0\}
\cup
\{v_{0,\,4a}:a\geq0\}.
\end{align*}
Consequently, with internal degree recorded by $t$,
\begin{equation}\label{eq:hilbert-series}
P_{Q^{(1)}M}(t)
=
\frac{t^2+t^4}{(1-t^4)(1-t^8)}
+
\frac{1+t^6}{1-t^8}.
\end{equation}
The corresponding series in cohomological degree is
\[
P^{\mathrm{coh}}_{Q^{(1)}M}(t)=t^2P_{Q^{(1)}M}(t).
\]
\end{proposition}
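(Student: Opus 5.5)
The proof has two parts. First, a graded lemma turns the degreewise bases of Theorem~\ref{thm:A1-exact} into a minimal homogeneous generating set. Second, a reindexing of Corollary~\ref{cor:A1-monomial-basis} identifies that set with $\mathcal G_{2,3}\cup\mathcal G_{0,1}$ and yields the series.

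\textbf{The graded lemma.} Since $M$ is connected (concentrated in internal degrees $h\geq0$) and each $M_h$ is finite-dimensional, I would argue by induction on $h$. Let $\mathcal G$ be a set of homogeneous elements whose classes form a basis of $Q_h^{(1)}M$ in every degree $h$. I claim that $\mathcal G$ generates $M$ over $\A(1)$. In degree $h$, every $z\in M_h$ can be written as a sum of elements of $\mathcal G_h$ plus an element of $H_h^{(1)}$. By Lemma~\ref{lem:generator-images}, the hit element is a sum $\Sq^1(z_1)+\Sq^2(z_2)$ with $z_i$ of internal degree $h-i<h$. By induction, each $z_i$ lies in the $\A(1)$-span of $\mathcal G$, so $z$ does too.

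Minimality follows from the fact that $\mathcal G_h$ is linearly independent in $Q_h^{(1)}M$. If some generator $g\in\mathcal G_h$ were redundant, then $g$ would be an $\A(1)$-combination of the remaining generators. The degree-zero part $\F_2$ of $\A(1)$ contributes only other elements of $\mathcal G_h$. The positive part contributes elements of $H_h^{(1)}$. Either way this gives a nontrivial relation among the classes of $\mathcal G_h$ in $Q_h^{(1)}M$, a contradiction. Consequently every minimal homogeneous generating set has exactly $\dim Q_h^{(1)}M$ elements in degree $h$, and the Hilbert series of $Q^{(1)}M$ counts generators.

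\textbf{Reindexing.} I would rewrite Corollary~\ref{cor:A1-monomial-basis}, which applies for every $h$, in the parameters $n=h-2m$ and $m$.
\begin{itemize}
\item[(i)] For $h\equiv0\pmod4$ the principal family is $v_{h-4-8a,\,2+4a}$. Its first exponent $h-4-8a$ is a nonnegative multiple of $4$. As $h$ ranges over multiples of $4$ and $a$ over admissible values, this gives exactly $\{v_{4c,\,2+4a}\}$, each pair $(c,a)$ once.
\item[(ii)] For $h\equiv2\pmod4$ the principal family is $v_{h-8a,\,4a}$. Here $h-8a\equiv2\pmod4$, giving $\{v_{4c+2,\,4a}\}$.
\item[(iii)] The endpoint $v_{0,h/2}$ appears for $h\equiv0\pmod8$, giving $v_{0,4a}$ for $a\geq0$ (including $h=0$).
\item[(iv)] The endpoint $v_{0,h/2}$ also appears for $h\equiv6\pmod8$, giving $v_{0,3+4a}$.
\end{itemize}
The two boundary families are disjoint from the principal families: the second index $3+4a$ is odd, and a first exponent $0$ is not $\equiv2\pmod4$. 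So the union is exactly $\mathcal G_{2,3}\cup\mathcal G_{0,1}$, and it is disjoint.

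\textbf{Hilbert series.} Each generator $v_{n,m}$ has internal degree $n+2m$, so each family contributes a geometric series:
\begin{itemize}
\item $\{v_{4a,\,2+4b}\}$ contributes $t^4/\bigl((1-t^4)(1-t^8)\bigr)$;
\item $\{v_{4a+2,\,4b}\}$ contributes $t^2/\bigl((1-t^4)(1-t^8)\bigr)$;
\item $\{v_{0,3+4a}\}$ contributes $t^6/(1-t^8)$;
\item $\{v_{0,4a}\}$ contributes $1/(1-t^8)$.
\end{itemize}
Summing these gives \eqref{eq:hilbert-series}. As a cross-check, the coefficients should match $\lfloor (h+2)/8\rfloor+1$ for even $h$ and $0$ for odd $h$, by Theorem~\ref{thm:A1-exact}. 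The cohomological series is obtained by multiplying by $t^2$, since $|v_{n,m}|=n+2m+2$.

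The main obstacle is the bookkeeping in the reindexing step. I need to check that the constraint $h-4-8a\geq0$ and its variants, together with the residue conditions, yield every pair $(c,a)$ exactly once, and that the boundary cases $h=0,2,4,6$ are handled consistently.
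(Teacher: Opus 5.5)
Your proposal is correct and follows the paper's proof essentially step for step. Like the paper, you reindex Corollary~\ref{cor:A1-monomial-basis} into the four monomial families, prove generation by induction on internal degree via \eqref{eq:A1-hit-space}, deduce minimality from linear independence of the classes in $Q_h^{(1)}M$, and sum four geometric series. One small slip: your disjointness justification does not cover $v_{0,4a}$ against the $c=0$ members $v_{0,2+4b}$ of $\{v_{4c,2+4b}\}$. These differ in the residue of the second index modulo $4$, which is also what gives $\mathcal G_{0,1}\cap\mathcal G_{2,3}=\emptyset$.
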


\begin{proof}
We first show that the displayed families reproduce Corollary~\ref{cor:A1-monomial-basis}.  A vector $v_{4a,2+4b}$ has internal degree
\[
4a+2(2+4b)=4a+4+8b,
\]
which is divisible by four.  Conversely, a principal basis vector in a degree $h\equiv0\pmod4$ has second exponent $m=2+4b$ and first exponent $h-2m$, which is a nonnegative multiple of four.  Hence it has the form $v_{4a,2+4b}$.  The extra endpoint in degree $h\equiv0\pmod8$ is $v_{0,4a}$.

A vector $v_{4a+2,4b}$ has internal degree
\[
4a+2+8b\equiv2\pmod4.
\]
Conversely, a principal basis vector in a degree $h\equiv2\pmod4$ has second exponent $m=4b$ and first exponent $h-2m\equiv2\pmod4$, so it has the form $v_{4a+2,4b}$.  The extra endpoint in degree $h\equiv6\pmod8$ is $v_{0,3+4a}$.  Thus their classes give the degreewise cohit bases.

Let $L$ be the $\A(1)$-submodule generated by their union. We prove $M_h\subseteq L$ by induction on $h\geq0$. In degree zero, $M_0$ is spanned by $v_{0,0}$, which belongs to $\mathcal G_{0,1}$. Suppose that $h>0$ and that every smaller nonnegative degree lies in $L$. For $x\in M_h$, its quotient class is a linear combination of the displayed generators of degree $h$. Let $g$ be the corresponding combination in $M_h$. By \eqref{eq:A1-hit-space}, there exist $y\in M_{h-1}$ and $z\in M_{h-2}$ such that
\[
x-g=\Sq^1(y)+\Sq^2(z).
\]
Any negative-degree source is zero by convention. All other source vectors belong to $L$ by induction, so $x\in L$. This proves that the union generates $M$.

To prove minimality, suppose one generator were generated by the others. Passing to $M/\A(1)^+M$ annihilates every positive-degree operation and expresses its quotient class as an $\F_2$-linear combination of the other classes. This contradicts the basis assertion. Hence no generator can be removed.

The generating function of $\{v_{4a,2+4b}\}$ is
\[
\sum_{a,b\geq0}t^{4a+4+8b}
=
\frac{t^4}{(1-t^4)(1-t^8)}.
\]
The generating function of $\{v_{4a+2,4b}\}$ is
\[
\sum_{a,b\geq0}t^{4a+2+8b}
=
\frac{t^2}{(1-t^4)(1-t^8)}.
\]
The two endpoint families contribute
\[
\sum_{a\geq0}t^{6+8a}=\frac{t^6}{1-t^8},
\qquad
\sum_{a\geq0}t^{8a}=\frac{1}{1-t^8}.
\]
Adding the four series gives \eqref{eq:hilbert-series}. Multiplying by $t^2$ changes internal degree $h$ to cohomological degree $h+2$.
\end{proof}

The following summands are defined using the Stiefel--Whitney monomial basis, not by choosing representatives of a minimal generating set. All decompositions and endomorphisms below are in the category of graded $\A(1)$-modules with degree-preserving maps; no topological wedge decomposition is asserted.

Define
\begin{align*}
M^{[0,1]}
&=\Span_{\F_2}\{v_{n,m}:m\equiv0\text{ or }1\pmod4\},\\
M^{[2,3]}
&=\Span_{\F_2}\{v_{n,m}:m\equiv2\text{ or }3\pmod4\}.
\end{align*}

\begin{theorem}\label{thm:A1-splitting}
Let $M^{[0,1]}$ and $M^{[2,3]}$ be the monomial spans defined above. Then both are $\A(1)$-submodules, and
\[
M=M^{[0,1]}\oplus M^{[2,3]}
\]
as graded $\A(1)$-modules.
\end{theorem}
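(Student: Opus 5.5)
The plan is to reduce everything to checking the two generators. By Lemma~\ref{lem:generator-images}, or more directly since $\A(1)$ is generated as an algebra by $\Sq^1$ and $\Sq^2$, a linear subspace of $M$ is an $\A(1)$-submodule as soon as it is stable under $\Sq^1$ and $\Sq^2$. Both $M^{[0,1]}$ and $M^{[2,3]}$ are spanned by disjoint subsets of the monomial basis $\{v_{n,m}\}$, and together these subsets exhaust the basis. Hence $M=M^{[0,1]}\oplus M^{[2,3]}$ as graded vector spaces; each span is homogeneous because each $v_{n,m}$ is. It then suffices to show that $\Sq^1$ and $\Sq^2$ map each span into itself. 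Once that holds, the vector-space decomposition is a decomposition of graded $\A(1)$-modules automatically.

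For stability I would use Corollary~\ref{cor:sq1-sq2}.

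\textbf{$\Sq^1$.} Since $\Sq^1(v_{n,m})=(n+m+1)v_{n+1,m}$ leaves $m$ unchanged, both spans are preserved.

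\textbf{$\Sq^2$.} We have $\Sq^2(v_{n,m})=\binom{n+m+1}{2}v_{n+2,m}+(m+1)v_{n,m+1}$. The first term keeps $m$. The second term changes $m$ to $m+1$, but its coefficient $m+1$ is nonzero in $\F_2$ only when $m$ is even. The only ways to leave a residue class pair $\{0,1\}$ or $\{2,3\}$ modulo $4$ by adding one are $m\equiv1\mapsto 2$ and $m\equiv3\mapsto0$. In both cases $m$ is odd, so the coefficient $m+1$ vanishes. When $m\equiv0$ the step is $0\mapsto1$, and when $m\equiv2$ it is $2\mapsto3$; both stay inside the same pair.

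So each span is closed under $\Sq^2$ as well, and the theorem follows. The only point needing care is this parity check on the coefficient $(m+1)$. It is exactly why the residue pairs are $\{0,1\}$ and $\{2,3\}$ rather than some other grouping.

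As a consistency check, I would note that the generating families of Proposition~\ref{prop:global-generators} sort accordingly. The family $\mathcal G_{0,1}$ lies in $M^{[0,1]}$ and $\mathcal G_{2,3}$ lies in $M^{[2,3]}$, since their second exponents are $4b,4a\equiv0$ and $2+4b\equiv2$, $3+4a\equiv3$ respectively. This matches the decomposition, though it is not needed for the proof.
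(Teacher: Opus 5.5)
Your proposal is correct and follows the paper's proof essentially step for step: the monomial basis splits by the residue of $m$ modulo $4$, $\Sq^1$ fixes $m$, and the only $m$-raising term of $\Sq^2$ has coefficient $m+1$, which vanishes exactly for the odd $m$ whose successor would leave the pair $\{0,1\}$ or $\{2,3\}$. Since $\A(1)$ is generated by $\Sq^1$ and $\Sq^2$, stability of each span under these two squares gives the graded $\A(1)$-module splitting, just as in the paper.
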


\begin{proof}
The two subspaces are spanned by disjoint subsets of the monomial basis $\{v_{n,m}\}$, and every nonnegative integer $m$ has exactly one of the four residues modulo four.  Therefore
\[
M=M^{[0,1]}\oplus M^{[2,3]}
\]
as graded vector spaces.  It remains to prove stability under the generators $\Sq^1$ and $\Sq^2$.

By \eqref{eq:sq1-nm}, the second index of every nonzero $\Sq^1$ image is unchanged. Hence $\Sq^1$ preserves each subspace.

The first term in \eqref{eq:sq2-nm} preserves $m$. The second term has index $m+1$ and coefficient $m+1$, so it is nonzero exactly when $m$ is even.  If $m\equiv0\pmod4$, the possible increase sends $m$ to a value congruent to $1$ modulo four.  If $m\equiv2\pmod4$, it sends $m$ to a value congruent to $3$ modulo four.  If $m$ is odd, the second term vanishes.  Thus $\Sq^2$ preserves the pair of residues $\{0,1\}$ and the pair $\{2,3\}$.  Since $\A(1)$ is generated by $\Sq^1$ and $\Sq^2$, both subspaces are $\A(1)$-submodules.  The vector-space direct sum is therefore a direct sum of $\A(1)$-modules.
\end{proof}

\begin{remark}\label{rem:idempotent}
Let $p$ be the coordinate projection onto $M^{[0,1]}$ along $M^{[2,3]}$. Theorem~\ref{thm:A1-splitting} gives $p\Sq^i=\Sq^ip$ for $i=1,2$, so $p\in\End_{\A(1)}(M)$ and $p^2=p$. It is neither zero nor the identity, because $p(v_{0,0})=v_{0,0}$ and $p(v_{0,2})=0$.
\end{remark}

\begin{corollary}\label{cor:summand-generators}
Let $M^{[0,1]}$ and $M^{[2,3]}$ be the summands in Theorem~\ref{thm:A1-splitting}. Then a minimal homogeneous generating set for $M^{[0,1]}$ is
\[
\{v_{4a+2,4b}:a,b\geq0\}
\cup
\{v_{0,4a}:a\geq0\}.
\]
A minimal homogeneous generating set for $M^{[2,3]}$ is
\[
\{v_{4a,2+4b}:a,b\geq0\}
\cup
\{v_{0,3+4a}:a\geq0\}.
\]
\end{corollary}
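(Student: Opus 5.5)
The argument reduces everything to the direct-sum decomposition of Theorem~\ref{thm:A1-splitting} combined with Proposition~\ref{prop:global-generators}. First, since $M=M^{[0,1]}\oplus M^{[2,3]}$ as graded $\A(1)$-modules, the augmentation ideal respects the splitting:
\[
\A(1)^+M=\A(1)^+M^{[0,1]}\oplus\A(1)^+M^{[2,3]},
\]
because each summand is stable. Consequently
\[
Q^{(1)}M\cong Q^{(1)}M^{[0,1]}\oplus Q^{(1)}M^{[2,3]}
\]
degreewise. The coordinate projection $p$ of Remark~\ref{rem:idempotent} commutes with $\Sq^1$ and $\Sq^2$, so it maps hit elements of $M$ to hit elements of $M^{[0,1]}$. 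The complementary projection does the same for $M^{[2,3]}$.

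Second, we sort the generators of Proposition~\ref{prop:global-generators} by the residue of the second index modulo $4$. The vectors $v_{4a+2,4b}$ and $v_{0,4a}$ have $m\equiv0\pmod4$, so they lie in $M^{[0,1]}$. The vectors $v_{4a,2+4b}$ and $v_{0,3+4a}$ have $m\equiv2$ or $3\pmod4$, so they lie in $M^{[2,3]}$. Thus $\mathcal G_{0,1}\subseteq M^{[0,1]}$ and $\mathcal G_{2,3}\subseteq M^{[2,3]}$.

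Generation. Take $x\in M^{[0,1]}$. By Proposition~\ref{prop:global-generators} we can write $x=\sum\theta_i g_i$ with $\theta_i\in\A(1)$ and $g_i\in\mathcal G_{0,1}\cup\mathcal G_{2,3}$. Applying $p$ and using its $\A(1)$-linearity gives
\[
x=p(x)=\sum\theta_i\,p(g_i).
\]
Here $p(g_i)=g_i$ for $g_i\in\mathcal G_{0,1}$ and $p(g_i)=0$ for $g_i\in\mathcal G_{2,3}$. Hence $\mathcal G_{0,1}$ generates $M^{[0,1]}$. The symmetric argument with $1-p$ shows that $\mathcal G_{2,3}$ generates $M^{[2,3]}$.

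Minimality. Suppose some $g\in\mathcal G_{0,1}$ lay in the $\A(1)$-submodule generated by the remaining elements of $\mathcal G_{0,1}$. Then its class in $Q^{(1)}M$ would be an $\F_2$-combination of the classes of other elements of $\mathcal G_{0,1}\cup\mathcal G_{2,3}$. This contradicts the basis statement of Theorem~\ref{thm:A1-exact}, which Proposition~\ref{prop:global-generators} transports to these monomials via Corollary~\ref{cor:A1-monomial-basis}. The same argument applies to $\mathcal G_{2,3}$.

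The main obstacle is only bookkeeping. One must check that the quotient classes in the summand agree with those in $M$. This follows from the first step, since an element of a summand is hit in $M$ if and only if it is hit within that summand; the reason is again that $p$ is $\A(1)$-linear.
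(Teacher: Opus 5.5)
Your proof is correct, but its generation step takes a different route from the paper's. The paper uses the splitting $\A(1)^+M=\A(1)^+M^{[0,1]}\oplus\A(1)^+M^{[2,3]}$ to split the cohit. It concludes that $\mathcal G_{0,1}$ and $\mathcal G_{2,3}$ represent bases of the cohits of their own summands. It then reruns the degree induction and the minimality argument of Proposition~\ref{prop:global-generators} inside each summand, using that both summands vanish in negative degrees.

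You instead push the global generation statement through the $\A(1)$-linear idempotent $p$, which fixes $\mathcal G_{0,1}$ and kills $\mathcal G_{2,3}$. You test minimality directly in $Q^{(1)}M$: there a relation $g=\sum\theta_ig_i$ reduces, after discarding the positive-degree parts of the $\theta_i$, to an $\F_2$-relation among basis classes. Consequently your first step and your closing bookkeeping paragraph are never used. Neither of your two arguments needs the splitting of the hit space. Minimality could even be read off directly from the minimality of $\mathcal G_{0,1}\cup\mathcal G_{2,3}$ in $M$.

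Your route is purely formal, and it works for any $\A(1)$-module decomposition compatible with a minimal generating set of the whole module. The paper's route stays inside the cohit framework and identifies $Q^{(1)}M^{[0,1]}$ and $Q^{(1)}M^{[2,3]}$ explicitly before deducing generation, at the cost of repeating the induction.
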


\begin{proof}
Since every element of $\A(1)^+$ preserves the summands in Theorem~\ref{thm:A1-splitting},
\[
\A(1)^+M=\A(1)^+M^{[0,1]}\oplus\A(1)^+M^{[2,3]}.
\]
Taking quotients therefore gives the corresponding direct-sum decomposition of cohits.  The global basis in Proposition~\ref{prop:global-generators} separates according to the residue of $m$ modulo four, with $\mathcal G_{0,1}$ lying in $M^{[0,1]}$ and $\mathcal G_{2,3}$ lying in $M^{[2,3]}$.  Each subset is therefore a basis of the cohit of its summand. Both summands are zero in negative internal degrees, so the induction and minimality argument in Proposition~\ref{prop:global-generators} apply to them as well.
\end{proof}

\section{Methodological scope and further directions}\label{s8}

The preceding sections settle the stated $\A(0)$ and $\A(1)$ calculations. We finish by distinguishing the extension of their rank calculations from the separate question of decomposability over larger subalgebras.

For $\mathcal{A}(2)$, the hit matrix gains the columns of $\Sq^4$, each of which has at most three terms by Theorem~\ref{thm:general-square}. For $\mathcal{A}(3)$, it also gains the columns of $\Sq^8$, each of which has at most five terms. Proposition~\ref{prop:certificate-principle} applies to these matrices without alteration. An extension of the present proof would require row and column selections whose determinants are verified for every degree, together with annihilators of the required dimension, or another argument controlling the ranks uniformly. Coefficient periodicity does not remove the need to verify source ranges and endpoint relations. The generating sets stated in \cite[Sections~4.3--4.4]{Shih2012} are not used or established here.

The certificate data in a fixed degree consist of the selected row and column indices, the determinant computation, the proposed quotient representatives, and the dual functionals with their vanishing and pairing checks. These data can be checked over $\F_2$ independently of the sequence of elimination operations used to discover them. The same linear-algebra criterion applies to polynomial hit matrices \cite{Phuc2026Minors}; its dual condition is the simultaneous Steenrod-kernel condition of \cite[Chapter~9]{WalkerWood1} and \cite[Chapter~26]{WalkerWood2}.

A failure to separate one chosen generating set into disjoint cyclic submodules does not exclude a decomposition obtained from different homogeneous generators. For a graded module, decomposability is equivalent to the existence of a degree-preserving idempotent other than $0$ and $1$: a decomposition supplies its projection, while an idempotent $p$ gives $M=\im p\oplus\ker p$. Thus, the non-splitting conjectures in \cite[Conjectures~4.6--4.7]{Shih2012} require an argument that rules out all such decompositions, not only those coming from the displayed generating families. Consequently, the $\mathcal{A}(1)$ result does not settle those conjectures.


\begin{thebibliography}{99}

\bibitem{Ault2014}
S.~V. Ault,
\emph{Bott periodicity in the hit problem},
Math. Proc. Cambridge Philos. Soc. \textbf{156} (2014), 545--554.
DOI: \doilink{10.1017/S0305004114000085}.

\bibitem{Phuc2026Minors}
P. V. \DD\d{\u a}ng,
\emph{Combinatorial bounds on the Peterson hit problem via certified matrix minors}, 
arXiv:2608.02623v2 [math.AT] (2026), 43 pages (submitted for publication).
DOI: \doilink{10.48550/arXiv.2608.02623}.

\bibitem{Fine1947}
N.~J. Fine,
\emph{Binomial coefficients modulo a prime},
Amer. Math. Monthly \textbf{54} (1947), 589--592.
DOI: \doilink{10.2307/2304500}.

\bibitem{Janfada2008}
A.~S. Janfada,
\emph{A criterion for a monomial in $P(3)$ to be hit},
Math. Proc. Cambridge Philos. Soc. \textbf{145} (2008), 587--599.
DOI: \doilink{10.1017/S030500410800162X}.

\bibitem{Janfada2009}
A.~S. Janfada,
\emph{A note on the unstability conditions of the Steenrod squares on the polynomial algebra},
J. Korean Math. Soc. \textbf{46} (2009), 907--918.
DOI: \doilink{10.4134/JKMS.2009.46.5.907}.

\bibitem{Janfada2011}
A.~S. Janfada,
\emph{On a conjecture on the symmetric hit problem},
Rend. Circ. Mat. Palermo (2) \textbf{60} (2011), 403--408.
DOI: \doilink{10.1007/s12215-011-0062-2}.

\bibitem{Janfada2014}
A.~S. Janfada,
\emph{Criteria for a symmetrized monomial in $B(3)$ to be non-hit},
Commun. Korean Math. Soc. \textbf{29} (2014), 463--478.
DOI: \doilink{10.4134/CKMS.2014.29.3.463}.

\bibitem{JanfadaWood2002}
A.~S. Janfada and R.~M.~W. Wood,
\emph{The hit problem for symmetric polynomials over the Steenrod algebra},
Math. Proc. Cambridge Philos. Soc. \textbf{133} (2002), 295--303.
DOI: \doilink{10.1017/S0305004102006059}.

\bibitem{JanfadaWood2003}
A.~S. Janfada and R.~M.~W. Wood,
\emph{Generating $H^*(BO(3),\F_2)$ as a module over the Steenrod algebra},
Math. Proc. Cambridge Philos. Soc. \textbf{134} (2003), 239--258.
DOI: \doilink{10.1017/S0305004102006394}.

\bibitem{Kameko1998}
M.~Kameko,
\emph{Generators of the cohomology of $BV_3$},
J. Math. Kyoto Univ. \textbf{38} (1998), 587--593.
DOI: \doilink{10.1215/kjm/1250518069}.

\bibitem{MilnorStasheff1974}
J.~W. Milnor and J.~D. Stasheff,
\emph{Characteristic classes},
Annals of Mathematics Studies, no.~76, Princeton University Press, Princeton, NJ, 1974.
DOI: \doilink{10.1515/9781400881826}.

\bibitem{PengelleyWilliams2011}
D.~J. Pengelley and F.~Williams,
\emph{A new action of the Kudo--Araki--May algebra on the dual of the symmetric algebras, with applications to the hit problem},
Algebr. Geom. Topol. \textbf{11} (2011), 1767--1780.
DOI: \doilink{10.2140/agt.2011.11.1767}.

\bibitem{PengelleyWilliams2015}
D.~J. Pengelley and F.~Williams,
\emph{Sparseness for the symmetric hit problem at all primes},
Math. Proc. Cambridge Philos. Soc. \textbf{158} (2015), 269--274.
DOI: \doilink{10.1017/S0305004114000668}.

\bibitem{Peterson1989}
F.~P. Peterson,
\emph{$\A$-generators for certain polynomial algebras},
Math. Proc. Cambridge Philos. Soc. \textbf{105} (1989), 311--312.
DOI: \doilink{10.1017/S0305004100067803}.

\bibitem{Shih2012}
M.~Shih,
\emph{On the splitting of $MO(2)$ over the Steenrod algebra},
unpublished Research Science Institute report, Massachusetts Institute of Technology, Cambridge, MA, 2012; available at \url{https://math.mit.edu/documents/rsi/2012Shih.pdf}.

\bibitem{Singer1991}
W.~M. Singer,
\emph{On the action of Steenrod squares on polynomial algebras},
Proc. Amer. Math. Soc. \textbf{111} (1991), 577--583.
DOI: \doilink{10.1090/S0002-9939-1991-1045150-9}.

\bibitem{Singer2008}
W.~M. Singer,
\emph{Rings of symmetric functions as modules over the Steenrod algebra},
Algebr. Geom. Topol. \textbf{8} (2008), 541--562.
DOI: \doilink{10.2140/agt.2008.8.541}.

\bibitem{Sum2015}
N. Sum, 
\emph{On the Peterson hit problem}, 
Adv. Math. \textbf{274} (2015), 432--489.
DOI: \doilink{10.1016/j.aim.2015.01.010}.

\bibitem{Thom1952}
R.~Thom,
\emph{Espaces fibr\'es en sph\`eres et carr\'es de Steenrod},
Ann. Sci. \'Ec. Norm. Sup\'er. (3) \textbf{69} (1952), 109--182.
DOI: \doilink{10.24033/asens.998}.

\bibitem{Thom1954}
R.~Thom,
\emph{Quelques propri\'et\'es globales des vari\'et\'es diff\'erentiables},
Comment. Math. Helv. \textbf{28} (1954), 17--86.
DOI: \doilink{10.1007/BF02566923}.

\bibitem{WalkerWood1}
G.~Walker and R.~M.~W. Wood,
\emph{Polynomials and the mod $2$ Steenrod algebra. Vol.~1: The Peterson hit problem},
London Math. Soc. Lecture Note Ser., vol.~441, Cambridge University Press, Cambridge, 2018.
DOI: \doilink{10.1017/9781108333368}.

\bibitem{WalkerWood2}
G.~Walker and R.~M.~W. Wood,
\emph{Polynomials and the mod $2$ Steenrod algebra. Vol.~2: Representations of $GL(n,\F_2)$},
London Math. Soc. Lecture Note Ser., vol.~442, Cambridge University Press, Cambridge, 2018.
DOI: \doilink{10.1017/9781108304092}.

\bibitem{Wood1989}
R.~M.~W. Wood,
\emph{Steenrod squares of polynomials and the Peterson conjecture},
Math. Proc. Cambridge Philos. Soc. \textbf{105} (1989), no.~2, 307--309.
DOI: \doilink{10.1017/S0305004100067797}.

\end{thebibliography}
\end{document}